\makeatletter
\documentclass[runningheads]{amsart}

\relax
\relax
\relax
\relax
\usepackage{graphicx}
\usepackage{amsmath}
\usepackage{amsfonts}
\usepackage[dvipsnames]{xcolor}
\usepackage{subcaption}
\usepackage{graphicx}
\usepackage{xspace}
\usepackage{amssymb}
\usepackage{algorithm}
\usepackage{algpseudocode}
\usepackage{mathtools}
\usepackage{array}
\usepackage{ragged2e}
\usepackage{float}
\usepackage{setspace}

\definecolor{linkcolor}{rgb}{0.65,0,0}
\definecolor{citecolor}{rgb}{0,0.4,0}
\definecolor{urlcolor}{rgb}{0,0,0.65}
\usepackage[backref=page,colorlinks=true,linkcolor=linkcolor,urlcolor=urlcolor,citecolor=citecolor,bookmarksnumbered=true,bookmarksopen=true,bookmarksopenlevel=2,unicode]{hyperref}

\usepackage{cleveref}

\usepackage{tikz}
\usetikzlibrary{arrows.meta,bending,positioning}

\newtheorem{theorem}{Theorem}
\newtheorem{lemma}[theorem]{Lemma}

\newtheorem{definition}{Definition}

\newtheorem{conjecture}[theorem]{Conjecture}
\newtheorem{remark}[theorem]{Remark}

\newcommand{\epsp}{\mathrm{epsp}}

\newcommand{\lcm}{\mathrm{lcm}}

\newcommand{\Z}{\mathbb{Z}}

\newcolumntype{R}[1]{>{\RaggedLeft\arraybackslash}m{#1}}

\title[Ours goes to 211: Euler Pseudoprimes to 47 Prime Bases]{Ours goes to 211: Euler Pseudoprimes to 47 Prime Bases (from Carmichael numbers)}
\author[A.~Alcantarilla Sánchez]{Alejandra Alcantarilla Sánchez}
\address{Alejandra Alcantarilla Sánchez \\
Eindhoven University of
Technology\\
Netherlands}
\email{alejandra.alcantarilla.sanchez@gmail.com}

\author[J.~Cottaar]{Jolijn Cottaar}
\address{Jolijn Cottaar \\
Eindhoven University of
Technology\\
Netherlands}
\email{jolijncottaar@gmail.com}

\author[T.~Lange]{Tanja Lange}
\address{Tanja Lange\\
Eindhoven University of
Technology\\
Netherlands\\
and Academia
Sinica\\
Taiwan}
\email{tanja@hyperelliptic.org}

\author[B.~de Weger]{Benne de Weger}
\address{Benne de Weger\\
Eindhoven University of
Technology\\
Netherlands}
\email{b.m.m.d.weger@tue.nl}

\begin{document}
\begin{abstract}
In this paper we show that a certain subset of the Carmichael numbers contains good
Euler pseudoprimes,
composite numbers that for many bases survive the Solovay-Strassen primality test.
We present a classification of Carmichael numbers,
and use the knowledge gained from this to create a fast algorithm to compute new Euler
pseudoprimes,
by multiplying already found Euler pseudoprimes.
We use this algorithm to find many Euler pseudoprimes that are pseudoprimes for several
consecutive prime bases starting at 2,
hence for all integer bases up to that number.
The best Euler pseudoprime we find survives up to 211,
i.e.,
survives the first 47 prime bases.
\end{abstract}

\maketitle

\section{Introduction}

Many public-key cryptosystems like RSA and Diffie-Hellman need prime numbers to function
and stay secure.
The creation of these large prime numbers is usually done by sampling numbers at random
until a prime is found.
Proving that such a random number is prime is computationally intensive and typically
skipped.

Instead implementations rely on probabilistic compositeness tests,
like Solovay-Strassen or Miller-Rabin.
These tests check for randomly chosen bases that some property,
which always holds for prime numbers,
holds for the tested number.
A composite number that survives the test is called a pseudoprime.
One runs the test with multiple bases to make it unlikely for random composite numbers
to be identified as prime.
If the ``prime'' selection is adversarially controlled,
they can fool the user into accepting a composite number if they have access to pseudoprimes
with high chance of passing these tests,
Albrecht,
Massimo,
Paterson,
and
Somorovsky~\cite{CCS:AMPS18}
introduced primality testing under adversarial conditions considering the Miller-Rabin
test.
A composite passing this test is called strong pseudoprime.
Sorenson and
Webster~\cite{2017/sorenson}
constructed a strong pseudoprime passing the first 12 prime bases.

In this paper we consider the Solovay-Strassen test,
which checks for a candidate $n$ and base $a$ if
$\left(\frac{a}{n}\right)
\equiv
a^\frac{n-1}{2}
\pmod
n$.
Surviving $n$ are called Euler pseudoprimes.
We construct an Euler pseudoprime passing the first 47 primes bases.
Miller-Rabin has at most
1/4
failure cases while Solovay-Strassen has at most
1/2,
explaining a factor of 2 in the number of bases by luck,
the rest is due to our construction.

We start with Carmichael numbers,
which are composites satisfying
$a^{n-1}\equiv
1
\pmod
n$ for all $a
\in
\Z_n^*$,
and identify a subset of these as best candidates for Euler pseudoprimes.
In
Section~\ref{ch:carmichael_numbers}
we classify the Carmichael numbers and identify a large class that attains the theoretical
maximum number of passing bases.
We prove easily testable,
necessary conditions on two Carmichael numbers for their product to be a Carmichael
number in that class.
If
$n_1$
is a Carmichael number then RSA functions unchanged with
$n_1$
in place of one of one of the primes.

In
Section~\ref{sec:construct_carmichael}
we show that multiplying already found Euler pseudoprimes with each other is a good
way to find more and better Euler pseudoprimes.
\relax
\relax
We provide our algorithm to find bigger and better Euler pseudoprimes by multiplying
Euler pseudoprimes within a certain class and give an overview of the Euler pseudoprimes
we found in
Section~\ref{ch:results}.
Finally we present an alternative approach in
Section~\ref{app:anonymous}.
\relax

\section{Background on pseudoprimes}
\relax
In this section we will introduce the types of pseudoprimes we consider in this paper
and notation used in the rest of this paper.
We start with Fermat pseudoprimes,
based on Fermat's little theorem.

\begin{definition}[Fermat pseudoprimes]
If n is an odd,
composite,
positive integer that satisfies
\[\quad a^{n-1} \equiv 1 \pmod{n}\]
where $a$ is a positive integer coprime to $n$,
then $n$ is called a Fermat pseudoprime to the base $a$,
and $a$ is called a Fermat liar,
otherwise $a$ is a Fermat witness.
\end{definition}

Contrary to the Solovay-Strassen test,
when using Fermat's test there does not always exist an $a$ for each composite $n$
(coprime to $n$) such that Fermat's test fails.
These $n$ that do not have such an $a$ are called Carmichael
numbers~\cite{Carmichael1910}.

\begin{definition}[Carmichael number]
\label{def: Carmichael number}
A Carmichael number is a composite number $n$ that satisfies for all $a$ relatively
prime to $n$ that
\[a^{n-1}\equiv1\pmod{n}.\]
\end{definition}

In other words,
it is a composite number that is a Fermat pseudoprime for all bases $a$ coprime to
$n$.
We use the extensive list of all Carmichael numbers up to
$10^{24}$
by Shallue and
Webster~\cite{2025/shallue,2025/webster}
(extending their ANTS-XVI
algorithm~\cite{2024/shallue}).

Korselt~\cite{Korselt1899}
has proven the following theorem on Carmichael numbers.
\begin{theorem}[Korselt's criterion]
A composite integer $n
>
1$ is a Carmichael number if and only if
\begin{itemize}
\item[(i)] $n$ is squarefree, and
\item[(ii)] for every prime $p$ dividing $n$, also $(p-1) \mid (n-1)$.
\end{itemize}
\label{thm: Korselt}
\end{theorem}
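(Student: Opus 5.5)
We prove both directions of Korselt's criterion, working throughout with the Chinese Remainder Theorem and the fact that $(\Z/p^k\Z)^*$ is cyclic for odd primes $p$. We use the characterization that $n$ is a Carmichael number iff $\lambda(n) \mid n-1$, where $\lambda(n)$ is the exponent of $(\Z/n\Z)^*$. Rather than quoting that, we argue directly with primitive roots.

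\emph{Sufficiency.} Suppose $n$ is squarefree, say $n = p_1\cdots p_k$, and $(p_i-1)\mid(n-1)$ for each $i$. Let $a$ be coprime to $n$. For each $i$, Fermat's little theorem gives $a^{p_i-1}\equiv 1\pmod{p_i}$. Since $(p_i-1)\mid(n-1)$, it follows that $a^{n-1}\equiv 1 \pmod{p_i}$. The $p_i$ are distinct, so the Chinese Remainder Theorem (squarefreeness) yields $a^{n-1}\equiv 1\pmod n$. Since $n$ is composite, it is a Carmichael number.

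\emph{Necessity: preliminary.} Now let $n$ be a Carmichael number. First we show $n$ is odd. Indeed, $a=-1$ is coprime to $n$, so $(-1)^{n-1}\equiv 1 \pmod n$. If $n$ were even, then $n-1$ would be odd, giving $-1\equiv 1\pmod n$, so $n\mid 2$, which contradicts $n$ being composite.

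\emph{Necessity: the main step.} Let $p$ be any prime with $p^k\,\|\,n$ and $k\ge1$, so $p$ is odd. The group $(\Z/p^k\Z)^*$ is cyclic of order $p^{k-1}(p-1)$; let $g$ generate it. By the Chinese Remainder Theorem we can choose $a$ with $a\equiv g\pmod{p^k}$ and $a\equiv 1$ modulo $n/p^k$. Then $a$ is coprime to $n$, so $a^{n-1}\equiv1\pmod n$, and in particular $g^{n-1}\equiv 1\pmod{p^k}$. Hence the order of $g$ divides $n-1$:
\[p^{k-1}(p-1)\mid n-1.\]
This gives (ii) immediately, since $(p-1)\mid(n-1)$. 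For (i), suppose $k\ge2$. Then $p\mid n-1$, but also $p\mid n$, which is impossible. So $k=1$ for every prime divisor $p$, and $n$ is squarefree.

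The only step requiring care is the existence of a primitive root modulo $p^k$ for odd $p$, which is needed to obtain the factor $p^{k-1}$ and hence squarefreeness. This is a standard fact, and we would cite it rather than reprove it.
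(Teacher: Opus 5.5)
Your proof is correct and complete, modulo the standard fact that $(\Z/p^k\Z)^*$ is cyclic for odd $p$. The paper gives no proof of this statement. It attributes the criterion to Korselt~\cite{Korselt1899} and then uses it as a black box, together with $\lambda(n)=\lcm(p_1-1,\ldots,p_k-1)$ for squarefree $n$. So your argument is not an alternative route but the standard self-contained proof the paper omits.

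Each step holds:
\begin{itemize}
\item The base $-1$ forces $n$ to be odd, which is exactly what lets you use a generator modulo $p^k$.
\item The CRT lift of that generator gives $p^{k-1}(p-1)\mid n-1$. This yields (ii), and since $p\nmid n-1$, it also yields squarefreeness.
\item Sufficiency is Fermat's little theorem plus CRT over distinct primes.
\end{itemize}

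If you want to avoid primitive roots modulo prime powers (and the oddness detour), squarefreeness also follows from the base $a=1+n/p$ when $p^2\mid n$. Every prime divisor of $n$ divides $n/p$, so $a$ is a unit. Since $(n/p)^2\equiv 0\pmod n$, you get $a^{n-1}\equiv 1-n/p\not\equiv 1\pmod n$. After that, (ii) needs only a primitive root modulo the prime $p$.
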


The main primality test we concern ourselves with in this paper is the Solovay-Strassen
test,
which is based on Euler's criterion.
\begin{theorem}[Euler's criterion]
Let $p$ be an odd prime and
$a\in
\Z_p^*$.
Then
\[\left( \frac{a}{p}\right) \equiv a^{\frac{p-1}{2}} \pmod{p}.\]
\label{thm:Euler-criterion}
\end{theorem}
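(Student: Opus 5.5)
The plan is the classical argument via Fermat's little theorem and the cyclicity of $\Z_p^*$. Fix an odd prime $p$ and $a\in\Z_p^*$, and set $b=a^{\frac{p-1}{2}}$. Fermat's little theorem gives $b^2=a^{p-1}\equiv 1\pmod p$. Since $\Z_p$ is a field, the polynomial $x^2-1$ has only the roots $\pm1$, so $b\equiv\pm1\pmod p$. It then remains to show that $b\equiv 1$ exactly when $a$ is a quadratic residue modulo $p$, since that matches the definition of the Legendre symbol $\left(\frac{a}{p}\right)\in\{\pm1\}$.

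\textbf{Residue direction.} If $a\equiv x^2\pmod p$ with $x\in\Z_p^*$, then $b\equiv x^{p-1}\equiv 1$, again by Fermat.

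\textbf{Nonresidue direction.} Here I would use that $\Z_p^*$ is cyclic of order $p-1$; let $g$ be a generator. Write $a=g^k$. The quadratic residues are exactly the even powers of $g$: even powers are visibly squares, and squaring $g^j$ gives $g^{2j}$, which is an even power because the group order $p-1$ is even. Hence $a$ is a nonresidue exactly when $k$ is odd. For odd $k$ we get $b=g^{k\frac{p-1}{2}}$. Since $g$ has order exactly $p-1$, this equals $1$ if and only if $(p-1)\mid k\frac{p-1}{2}$, i.e.\ if and only if $2\mid k$. So for odd $k$ we have $b\not\equiv 1$, forcing $b\equiv -1$.

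Combining the two directions gives $b\equiv\left(\frac{a}{p}\right)\pmod p$ in all cases.

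\textbf{Main obstacle.} The only nontrivial step is the nonresidue direction, which relies on the existence of a primitive root. If I wanted to avoid citing cyclicity, I would instead count. The squaring map on $\Z_p^*$ is $2$-to-$1$, because $x^2=y^2$ implies $x=\pm y$ in a field. So there are exactly $\frac{p-1}{2}$ residues. Each residue is a root of $x^{\frac{p-1}{2}}-1$, and a polynomial of that degree has at most $\frac{p-1}{2}$ roots in the field $\Z_p$. Hence the residues are precisely these roots, no nonresidue satisfies $b\equiv 1$, and every nonresidue must therefore satisfy $b\equiv -1$.
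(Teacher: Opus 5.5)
Your proof is correct and complete. The paper gives no proof of Euler's criterion. It only states it as a classical background fact, and later uses it, e.g.\ in the proof of Lemma~\ref{lem:classB} to pass from $a^{(p_i-1)/2}\equiv 1 \pmod{p_i}$ to $a$ being a square modulo $p_i$. So there is no argument in the paper to compare yours against.

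What you wrote is the standard textbook proof. Of your two versions of the nonresidue direction:
\begin{itemize}
\item The root-counting argument is the more self-contained one. It bounds the number of roots of $x^{\frac{p-1}{2}}-1$ in the field $\Z_p$ by its degree, so it does not need the existence of primitive roots.
\item The primitive-root argument gives more structural information: the residues are exactly the even powers of $g$.
\end{itemize}

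One small wording point. The evenness of $p-1$ is what makes the parity of the exponent $k$ well defined modulo $p-1$. Its role is to rule out $g^k=g^{2j}$ with $k$ odd. It is not needed to see that $g^{2j}$ is an even power, which is where your sentence attaches it.
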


We can now define Euler pseudoprimes,
which for certain $a$ still adhere to Euler's criterion,
even though they are composite.

\begin{definition}[Euler pseudoprime]
If $n$ is an odd,
composite,
positive integer that satisfies
\begin{equation}
\left(\frac{a}{n}\right) \equiv a^{\frac{n-1}{2}} \pmod{n},
\label{eq:ProofSSAssumption}
\end{equation}
where $a$ is a positive integer coprime to $n$,
then $n$ is called an Euler pseudoprime to the base $a$,
and $a$ is called an Euler liar,
otherwise $a$ is called an Euler witness.
\end{definition}

\relax

There is no analogue to Carmichael numbers for Euler's criterion.

\begin{theorem}
\label{lem:solovay-strassen}
For any odd composite number
\(
n
\),
there exists an
$a\in
\Z_n^*$
such that
\begin{equation*}
\left( \frac{a}{n} \right)  \not \equiv  a^{\frac{n-1}{2}} \pmod{n}.
\label{eq: SS non equal}
\end{equation*}
\end{theorem}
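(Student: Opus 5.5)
We argue by contradiction: suppose every $a\in\Z_n^*$ satisfies $\left(\frac{a}{n}\right)\equiv a^{\frac{n-1}{2}}\pmod n$. Squaring gives $a^{n-1}\equiv 1\pmod n$ for all $a$ coprime to $n$, so $n$ is a Carmichael number. By Korselt's criterion (Theorem~\ref{thm: Korselt}), $n$ is therefore squarefree. If one prefers to avoid relying on Korselt for squarefreeness, the case $p^2\mid n$ can also be handled directly. The group $\Z_{p^2}^*$ is cyclic of order $p(p-1)$. Choose $a\equiv 1+p\pmod{p^2}$ and $a\equiv 1$ modulo the rest of $n$. Then $a$ has order $p$ modulo $p^2$, while $p\nmid n-1$, so $a^{n-1}\not\equiv 1\pmod{p^2}$. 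Either way, we may write $n=p\,m$ with $p$ an odd prime, $\gcd(p,m)=1$ and $m>1$.

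The main step is the Chinese remainder construction. Let $g$ be a quadratic nonresidue modulo $p$. By CRT choose $a\in\Z_n^*$ with $a\equiv g\pmod p$ and $a\equiv 1\pmod m$. Multiplicativity of the Jacobi symbol in the denominator gives
\[\left(\frac{a}{n}\right)=\left(\frac{a}{p}\right)\left(\frac{a}{m}\right)=(-1)\cdot\left(\frac{1}{m}\right)=-1.\]
So the assumption forces $a^{\frac{n-1}{2}}\equiv -1\pmod n$. Reducing modulo $m$ instead gives $a^{\frac{n-1}{2}}\equiv 1^{\frac{n-1}{2}}=1\pmod m$. Since $m>1$ is odd, $1\not\equiv -1\pmod m$, which is a contradiction. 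Hence this $a$ is an Euler witness.

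The only delicate point is making sure $m>1$ and $\gcd(p,m)=1$, which is exactly where squarefreeness (or the separate prime-square argument) is used. I also need the Jacobi symbol to be multiplicative in the modulus, which is standard. Everything else is routine.
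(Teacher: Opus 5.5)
Your proof is correct. It is essentially the standard textbook argument from Rosen, which the paper cites rather than reproducing: if every base were a liar, then $n$ would be a Carmichael number and hence squarefree, and a CRT element that is a nonresidue modulo one prime $p\mid n$ and $\equiv 1$ modulo $m=n/p$ has Jacobi symbol $-1$ while $a^{(n-1)/2}\equiv 1 \pmod m$, giving the contradiction $-1\equiv 1\pmod m$. Your optional direct treatment of the case $p^2\mid n$ via $a\equiv 1+p \pmod{p^2}$ is also valid, though unnecessary once Korselt's criterion is invoked.
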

A proof can be found in
\cite{Rosen19861984}
by Rosen.
\begin{lemma}\label{lem:max-phi}
For any odd composite number
\(
n
\),
there exist at most
$\frac{1}{2}
\varphi(n)$
Euler liars.
\end{lemma}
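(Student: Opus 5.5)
The plan is the standard subgroup argument, using Theorem~\ref{lem:solovay-strassen} as the only input. Let
\[
E(n) = \left\{ a \in \Z_n^* : \left(\tfrac{a}{n}\right) \equiv a^{\frac{n-1}{2}} \pmod{n} \right\}
\]
be the set of Euler liars (as residues modulo $n$). I will show that $E(n)$ is a subgroup of $\Z_n^*$ and that it is proper. Then Lagrange's theorem gives that $|E(n)|$ divides $\varphi(n)$ and is strictly smaller, so $|E(n)| \le \tfrac12 \varphi(n)$.

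\emph{Step 1: $E(n)$ is a subgroup.} Note that $1 \in E(n)$. For $a, b \in E(n)$, the Jacobi symbol is completely multiplicative in its numerator, so
\[
\left(\tfrac{ab}{n}\right) = \left(\tfrac{a}{n}\right)\left(\tfrac{b}{n}\right) \equiv a^{\frac{n-1}{2}} b^{\frac{n-1}{2}} = (ab)^{\frac{n-1}{2}} \pmod n .
\]
Hence $ab \in E(n)$. Since $\Z_n^*$ is finite, a nonempty subset closed under multiplication is already a subgroup, so inverses need no separate check. One small point: the Jacobi symbol depends only on $a \bmod n$. This makes $E(n)$ well defined as a subset of $\Z_n^*$, which matters because the definition of Euler liars speaks of positive integers $a$.

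\emph{Step 2: $E(n)$ is proper.} This is exactly Theorem~\ref{lem:solovay-strassen}: since $n$ is odd and composite, some $a \in \Z_n^*$ is an Euler witness, so $E(n) \neq \Z_n^*$.

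\emph{Step 3: conclusion.} A proper subgroup has index at least $2$, so $|E(n)| \le \varphi(n)/2$. Counting liars as residues in $\Z_n^*$ gives the stated bound.

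There is no real obstacle here, since the hard part, existence of a witness, is supplied by Theorem~\ref{lem:solovay-strassen}. If I had to prove that theorem myself, I would split into two cases. If $p^2 \mid n$, take an element of order $p$ modulo $n$ to contradict the Fermat condition. If $n$ is squarefree, use the Chinese remainder theorem to choose $a$ as a non-residue modulo one prime $p \mid n$ and $a \equiv 1$ modulo $n/p$. Then $\left(\tfrac{a}{n}\right) = -1$ while $a^{\frac{n-1}{2}} \equiv 1 \pmod{n/p}$, so $a$ is a witness.
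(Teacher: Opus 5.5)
Your proof is correct and is essentially the paper's argument. The paper fixes a witness $w$ and notes that $a \mapsto aw$ injects liars into witnesses, which is exactly the statement that the coset $wE(n)$ is disjoint from $E(n)$; your subgroup-plus-Lagrange version uses the same multiplicativity and the same appeal to Theorem~\ref{lem:solovay-strassen}, and additionally gives that the number of liars divides $\varphi(n)$.
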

\begin{proof}
Let $w$ be an Euler witness for $n$,
which exists by
Theorem~\ref{lem:solovay-strassen},
and let $a$ be an Euler liar.
By multiplicativity of both sides in
(\ref{eq:ProofSSAssumption}),
$aw$ is an Euler witness.
Taking products with $w$,
there is at least one witness for each liar,
so that at most half of the
$\varphi(n)$
integers coprime to $n$ are Euler liars.
\end{proof}
\relax

From this we see that each $a$ has more than
50\%
chance of identifying a composite $n$ as such,
hence,
randomly chosen $n$ will not satisfy
(\ref{eq:ProofSSAssumption})
for many consecutive prime bases.
The following observation,
along with experiments,
guided our search to considering Carmichael numbers.
By
Lemma~\ref{thm:
eulerfermat},
any Euler liar is a Fermat liar

\begin{lemma}
If $n$ is an Euler pseudoprime to the base $a$,
then $n$ is a Fermat pseudoprime to the base $a$.
\label{thm: eulerfermat}
\end{lemma}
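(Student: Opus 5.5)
The plan is simply to square both sides of the defining congruence (\ref{eq:ProofSSAssumption}). Assume $n$ is an odd composite that is an Euler pseudoprime to the base $a$, with $\gcd(a,n)=1$. Since $n$ is odd, $\frac{n-1}{2}$ is an integer, so $a^{\frac{n-1}{2}}$ is a well-defined residue modulo $n$. The hypothesis is then
\[
\left(\frac{a}{n}\right) \equiv a^{\frac{n-1}{2}} \pmod{n}.
\]

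The key step is to pin down the value of the Jacobi symbol. Because $\gcd(a,n)=1$, each factor $\left(\frac{a}{p}\right)$ for a prime $p \mid n$ is $\pm 1$. Their product $\left(\frac{a}{n}\right)$ is therefore also $\pm1$, and in particular it is never $0$. Squaring both sides of the congruence gives
\[
1 = \left(\frac{a}{n}\right)^2 \equiv \left(a^{\frac{n-1}{2}}\right)^2 = a^{n-1} \pmod{n}.
\]

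This is exactly the Fermat condition. Since $n$ is odd, composite and positive, and $a$ is coprime to $n$, the definition of a Fermat pseudoprime to the base $a$ is satisfied.

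I do not expect a real obstacle here. The only point that needs care is the remark that coprimality forces the Jacobi symbol to be $\pm1$ rather than $0$, since that is what makes its square equal to $1$.
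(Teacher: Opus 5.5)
Your proposal is correct and matches the paper's proof: square both sides of (\ref{eq:ProofSSAssumption}) and use that $\gcd(a,n)=1$ forces $\left(\frac{a}{n}\right)\in\{-1,1\}$, so its square is $1$ and $a^{n-1}\equiv 1 \pmod{n}$. Your version is slightly more careful than the paper's one-line proof, which only mentions $\left(\frac{a}{p}\right)$, because you also note that the Jacobi symbol modulo $n$ is a product of such factors over the primes $p \mid n$.
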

\begin{proof}
Square both sides and observe that
$\left(
\frac{a}{p}\right)
\in
\{-1,1\}$
as
$\gcd(a,n)=1$.
\end{proof}

\section{Classification of Carmichael numbers} \label{ch:carmichael_numbers}
Carmichael numbers are good candidates for Euler pseudoprimes.
In this section we introduce a classification of Carmichael numbers that determines
the prevalence of Euler liars.
By Korselt's criterion
(Theorem~\ref{thm:
Korselt}),
Carmichael numbers are squarefree,
thus we can write any Carmichael number as $n =
\prod_{i
=1}^{k}p_i$,
where
$p_i$
are distinct odd primes and $k
\geq
3$.
Our classification needs the Carmichael lambda function.

\begin{definition}[Carmichael Lambda Function]\label{def:car_lambda}
The Carmichael lambda function
$\lambda(n)$
is defined as the smallest integer
$\ell
>1$
for which
$$a^\ell
\equiv
1
\pmod
n,$$ for all $a
\in
\mathbb{Z}_n^*$.
\end{definition}
In our case of considering Carmichael numbers,
$n$ is squarefree so that
$$\lambda(n)
=
\lcm(p_1
-1,
...
,
p_k-1).$$
It is even,
$\lambda(n)|(n-1)$
(since $n$ is a Carmichael number) and
$\lambda(n)|
\varphi(n)
=
\Pi_{i=1}^k
(p_i
-1)$.
This gives the following alternative characterization of Carmichael numbers.

\begin{lemma}\label{lem:alter_carm}
Let $n$ be composite.
Then $n$ is a Carmichael number if and only if
$\lambda(n)|
(n-1)$.
\end{lemma}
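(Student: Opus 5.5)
The plan is to prove both directions straight from Definition~\ref{def: Carmichael number} and Definition~\ref{def:car_lambda}. The one technical ingredient is the standard fact that $\lambda(n)$ is the \emph{exponent} of the group $\Z_n^*$. Equivalently, for every integer $m$ we have $a^m\equiv 1 \pmod n$ for all $a\in\Z_n^*$ if and only if $\lambda(n)\mid m$.

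First I establish this fact. One implication is immediate: if $\lambda(n)\mid m$, write $m=q\lambda(n)$, and then $a^m=(a^{\lambda(n)})^q\equiv 1$. For the converse, suppose $a^m\equiv 1$ for all $a\in\Z_n^*$. Divide with remainder to get $m=q\lambda(n)+r$ with $0\le r<\lambda(n)$. Then $a^r\equiv a^m (a^{\lambda(n)})^{-q}\equiv 1$ for all $a$. Minimality of $\lambda(n)$ forces $r=0$, provided we read the definition as ``smallest positive $\ell$''. The stated condition $\ell>1$ only matters for $n\le 2$, and those cases are irrelevant since $n$ is composite.

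\emph{($\Leftarrow$)} If $\lambda(n)\mid(n-1)$, the fact gives $a^{n-1}\equiv 1\pmod n$ for every $a$ coprime to $n$. Since $n$ is composite, this is exactly Definition~\ref{def: Carmichael number}.

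\emph{($\Rightarrow$)} If $n$ is Carmichael, then $a^{n-1}\equiv1$ for all $a\in\Z_n^*$, and the fact with $m=n-1$ yields $\lambda(n)\mid(n-1)$.

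There is an alternative route through Korselt's criterion (Theorem~\ref{thm: Korselt}), which would use the formula $\lambda(n)=\lcm(p_1-1,\dots,p_k-1)$. Assuming $n$ is squarefree, $\lambda(n)\mid(n-1)$ holds if and only if every $(p_i-1)\mid(n-1)$, since $\lambda(n)$ is the lcm of the $p_i-1$. That matches condition (ii). The drawback is that the direction $(\Leftarrow)$ would then need squarefreeness to be derived separately. The group-exponent argument above avoids this, so I would use it.

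The only real obstacle is the divisibility property of the exponent, that is, the remainder argument together with the correct reading of the minimality in Definition~\ref{def:car_lambda}. I expect the rest to be routine.
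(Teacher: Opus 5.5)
Your proof is correct, but it takes a different route from the one the paper has in mind. The paper gives no separate proof. It presents the lemma as a consequence of the preceding remarks, namely the formula $\lambda(n)=\lcm(p_1-1,\dots,p_k-1)$ for squarefree $n$ combined with Korselt's criterion, so that $\lambda(n)\mid(n-1)$ is just condition (ii) rewritten. That is essentially the alternative you mention and set aside. You instead argue from the definitions alone. Once $\lambda(n)$ is shown to divide every $m$ with $a^m\equiv 1\pmod n$ for all $a\in\Z_n^*$, each direction takes one line and holds for every composite $n$. Your remainder argument handles the paper's ``$\ell>1$'' correctly, since for $n\ge 3$ the element $-1$ already excludes $\ell=1$. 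Nothing in your argument uses squarefreeness, the CRT structure of $\Z_n^*$, or Korselt. As sketched, the paper's route proves $(\Leftarrow)$ only for squarefree $n$. To close it one needs something like $\lambda(p^e)=p^{e-1}(p-1)$, or the element $1+n/p$ of order $p$. Either way, $p^2\mid n$ forces $p\mid\lambda(n)\mid(n-1)$, which is impossible. What the Korselt route buys is the explicit link to the prime factorization, which the paper exploits later, for instance in computing $\lambda(n_1n_2)=\lcm(\lambda(n_1),\lambda(n_2))$ and $v_2(\lambda(n))=\max_i v_2(p_i-1)$. Your route buys a short, self-contained and fully general proof of the characterization itself.
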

\relax

We also need the index of a Carmichael number as defined by Pinch
in~\cite{pinch2006carmichaelnumbers1018}.
\begin{definition}[Index] \label{def:index}
The index of a Carmichael number $n$ is defined as $$ i(n) =
\frac{n-1}{\lambda(n)}.
$$
\end{definition}

This permits us to define a coarse classification of Carmichael numbers.

\begin{definition}[Classification of Carmichael numbers]
Let $n$ be a Carmichael number and $i(n)$ its index.
\begin{description}
\item[Class A] If $i(n)$ is even then $n$ is in class A.
\item[Class B] If $i(n)$ is odd then $n$ is in class B.
\end{description}
\end{definition}

For class A it is easy to determine the proportion of Euler liars among
$\Z_n^*$
as we show in the following lemma.

\begin{lemma}\label{lem:class_a}
Let $n$ be a Carmichael number in class A.
Then there are
$\frac12
\varphi(n)$
Euler liars in
$\Z_n^*$
and all have Jacobi symbol 1.
\end{lemma}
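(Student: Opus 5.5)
Since $i(n)$ is even, write $n-1 = i(n)\lambda(n)$ with $i(n)=2m$, so that $\frac{n-1}{2} = m\lambda(n)$ is a multiple of $\lambda(n)$. By Definition~\ref{def:car_lambda}, $a^{\lambda(n)}\equiv 1 \pmod n$ for every $a\in\Z_n^*$, and hence
\[
a^{\frac{n-1}{2}} = \bigl(a^{\lambda(n)}\bigr)^{m} \equiv 1 \pmod n \quad\text{for all } a\in\Z_n^*.
\]
The Solovay--Strassen condition (\ref{eq:ProofSSAssumption}) therefore reduces to $\left(\frac{a}{n}\right)\equiv 1 \pmod n$. Since $\gcd(a,n)=1$, the Jacobi symbol lies in $\{-1,1\}$, and $-1\not\equiv 1 \pmod n$ because $n>2$. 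So $a$ is an Euler liar exactly when $\left(\frac{a}{n}\right)=1$. This already proves that every Euler liar has Jacobi symbol $1$.

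It remains to count the $a\in\Z_n^*$ with $\left(\frac{a}{n}\right)=1$. The map $a\mapsto\left(\frac{a}{n}\right)$ is a group homomorphism $\Z_n^*\to\{\pm1\}$, so its kernel has index $1$ or $2$. We show the map is surjective, which makes the kernel have size exactly $\frac12\varphi(n)$.

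To see surjectivity, write $n=p_1\cdots p_k$ with the $p_i$ distinct odd primes, as Korselt's criterion (Theorem~\ref{thm: Korselt}) allows. Choose a quadratic nonresidue $r$ modulo $p_1$. By the Chinese remainder theorem, pick $a$ with $a\equiv r \pmod{p_1}$ and $a\equiv 1 \pmod{p_i}$ for $i\ge 2$. Then
\[
\left(\frac{a}{n}\right)=\prod_{i=1}^{k}\left(\frac{a}{p_i}\right) = -1 .
\]
Squarefreeness is what makes this work, since each prime appears to the first power. Lemma~\ref{lem:max-phi} gives the upper bound $\frac12\varphi(n)$ independently, which is consistent with this count.

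The main point is the observation that $\lambda(n)$ divides $\frac{n-1}{2}$, which is exactly the class A hypothesis; the remaining steps are routine.
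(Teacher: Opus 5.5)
Your proof is correct and follows the paper's argument: the class A hypothesis gives $a^{\frac{n-1}{2}}=\left(a^{\lambda(n)}\right)^{i(n)/2}\equiv 1 \pmod n$ for all $a\in\Z_n^*$, so the Euler liars are exactly the elements with $\left(\frac{a}{n}\right)=1$. You additionally justify that exactly half of $\Z_n^*$ has Jacobi symbol $1$: you build an element with symbol $-1$ via the CRT, which makes the Jacobi-symbol homomorphism surjective. The paper simply asserts this step.
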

\begin{proof}
For all
$a\in
\Z_n^*$
we have
$a^{\frac{n-1}{2}}=\left(a^{\lambda(n)}\right)^{i(n)/2}\equiv
1
\pmod
n$ by the definition of the Carmichael function and class A.
Hence,
exactly those $a$ with
$\left(\frac{a}{n}\right)=1$
satisfy
(\ref{eq:ProofSSAssumption})
and half of the
$\varphi(n)$
elements in
$\Z_n^*$
have Jacobi symbol 1.
\end{proof}

To handle class B we need to study the square roots of 1 in
$\Z_n^*$.
Given that $n$ has $k$ factors there are
$2^k$
square roots of 1 in
$\Z_n^*$
and the group structure matches
$\Z_2^k$,
but not all may appear as
$a^{\frac{n-1}{2}}$.
Since $i(n)$ is odd and
$a^{{n-1}}\equiv
1
\pmod
n$,
we have that
$a^{\frac{n-1}{2}}=\left(a^{\frac{\lambda(n)}{2}}\right)^{i(n)}\equiv
1
\pmod
n$ implies that
$a^{\frac{\lambda(n)}{2}}\equiv
1
\pmod
n$.
By the same argument
 and
using the group structure of
$\Z_2^k$,
we have that
$a^{\frac{n-1}{2}}=\left(a^{\frac{\lambda(n)}{2}}\right)^{i(n)}\equiv
-1
\pmod
n$ implies that
$a^{\frac{\lambda(n)}{2}}\equiv
-1
\pmod
n$.
The split of class B will be along the lines of whether $-1$ appears or not,
but we give a classification that is easier to check given the factorization of $n$
which needs some further notation.

\begin{definition}[2-adic valuation]
The $2$-adic valuation of an integer $n$ is the largest integer $N$ such that
$2^N
|
n$.
We denote this as
$v_2(n)$.
\end{definition}

By minimality of
$\lambda(n)$,
we see
$v_2(\lambda(n))
=
\max_{i
\in
\{1,...,k\}}
\{v_2(p_i-1)\}$,
hence there is at least one
$p_i$
with
$v_2(\lambda(n))
=
v_2(p_i-1)$.
With that we can give the fine-grained classification of class B.

\begin{definition}[Classification of class B]
\label{def:classB}
Let $n$ be a Carmichael number of odd index $i(n)$.
Let $h$ be the number of factors
$p_i$
with
$v_2(\lambda(n))
=
v_2(p_i-1)$.
\begin{description}
\item[Class B1] If $1\le h < k$ then $n$ is in class B1.
\item[Class B2] If $h = k$ then $n$ is in class B2.
\end{description}
\end{definition}

We first establish a fact that might be of independent interest.
\begin{lemma}\label{lem:parity}
Let $n$ be a Carmichael number such that
$v_2(\lambda(n))
=
v_2(p_i-1)$
for all $
p_i
$.
Then the parity of $i(n)$ equals that of $k$.
\end{lemma}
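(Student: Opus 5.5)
Let $e = v_2(\lambda(n))$. By hypothesis every prime factor satisfies $p_i - 1 = 2^e m_i$ with $m_i$ odd. Our plan is to compute $v_2(n-1)$ directly from the factorization and compare it with $v_2(\lambda(n)) = e$, since $i(n) = (n-1)/\lambda(n)$ is odd exactly when $v_2(n-1) = e$. Because $n$ is a Carmichael number, $\lambda(n) \mid (n-1)$, so $v_2(n-1) \ge e$ always, and the whole question is whether $v_2(n-1) > e$.

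First we reduce modulo $2^{e+1}$. Since $m_i$ is odd, $2^e m_i \equiv 2^e \pmod{2^{e+1}}$, so each factor satisfies $p_i \equiv 1 + 2^e \pmod{2^{e+1}}$. Hence
\[ n = \prod_{i=1}^k p_i \equiv (1+2^e)^k \pmod{2^{e+1}}. \]

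Next we expand $(1+2^e)^k$ by the binomial theorem. The first two terms give $1 + k2^e$. Every later term is divisible by $2^{2e}$, and $2e \ge e+1$ because $e \ge 1$, since all $p_i$ are odd. Therefore
\[ n - 1 \equiv k\,2^e \pmod{2^{e+1}}. \]

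We can now read off the two cases. If $k$ is odd, then $v_2(n-1) = e$, so $i(n) = (n-1)/\lambda(n)$ has $2$-adic valuation $0$ and is odd. If $k$ is even, then $2^{e+1} \mid n-1$ while $v_2(\lambda(n)) = e$, so $i(n)$ is even. In both cases the parity of $i(n)$ equals the parity of $k$.

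The only step needing care is the binomial bound, namely that $2e \ge e+1$. This holds because $e \ge 1$, which in turn uses that $n$ is odd, as Korselt's criterion and the paper's setup of odd primes guarantee. Everything else is routine.
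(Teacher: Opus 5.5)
Your proof is correct and is essentially the paper's argument: both write $p_i - 1 = 2^e m_i$ with $m_i$ odd and expand $n-1$ modulo a power of $2$, so that $v_2(n-1) = e$ holds exactly when $k$ is odd. The only difference is cosmetic. The paper works modulo $2^{2v}$ with the sum $\sum 2^v r_i$, while you reduce each factor to $1+2^e$ modulo $2^{e+1}$ and expand $(1+2^e)^k$, which makes the required $e \ge 1$ step a little more explicit.
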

\begin{proof}
Let
$v=v_2(\lambda(n))=v_2(p_i-1)$
for all $i$,
i.e.,
we can write each
$p_i$
as
$1+2^vr_i$
for some odd
$r_i$.
From the definition of $n$ we get
$$n=\prod_{i=1}^k
p_i
=
\prod_{i=1}^k\left(1
+2^vr_i\right)
\equiv 1 + \sum_{i=1}^k 2^vr_i \pmod{2^{2v}}.$$
Note that
$v_2(n-1)=v_2(i(n)\lambda(n))=v_2(i(n))+v$
while
$v_2\left(\sum_{i=1}^k
2^vr_i\right)=v$
if and only if $k$ is odd.
\end{proof}

We can now state the number of Euler liars for both cases in class B.

\begin{lemma}\label{lem:classB}
Let $n$ be a Carmichael number of class B.
\begin{enumerate}
\item If $n$ is in class B1 there are $\frac{1}{2^{h+1}}\varphi(n)$ Euler liars in $\mathbb{Z}_n^*$ and all
have Jacobi symbol 1.
\item If $n$ is in class B2 there are $\frac{1}{2^{k-1}}\varphi(n)$ Euler liars in $\mathbb{Z}_n^*$ and 1 and
-1 appear as Jacobi symbol equally often.
\end{enumerate}
\end{lemma}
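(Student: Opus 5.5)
The plan is to work componentwise via the Chinese Remainder Theorem, $\Z_n^*\cong\prod_{i=1}^k \Z_{p_i}^*$, with each factor cyclic of order $p_i-1$, and to compute $a^{\frac{n-1}{2}}\bmod p_i$ explicitly in terms of the Legendre symbol $\left(\frac{a}{p_i}\right)$. Once that is done, the Euler condition (\ref{eq:ProofSSAssumption}) becomes a system of conditions on the $k$ Legendre symbols of $a$, and these are easy to count.

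\textbf{Key computation.} Put $v=v_2(\lambda(n))$. Since $n-1=i(n)\lambda(n)$ with $i(n)$ odd, we have $v_2\!\left(\frac{n-1}{2}\right)=v-1$. Because $n$ is Carmichael, $(p_i-1)\mid(n-1)$ for each $i$, and we split into two cases.
\begin{itemize}
\item If $v_2(p_i-1)<v$, then $(p_i-1)\mid\frac{n-1}{2}$, so $a^{\frac{n-1}{2}}\equiv1\pmod{p_i}$ for every $a$.
\item If $v_2(p_i-1)=v$, then $m_i:=\frac{n-1}{p_i-1}$ is odd. By Theorem~\ref{thm:Euler-criterion},
\[a^{\frac{n-1}{2}}=\bigl(a^{\frac{p_i-1}{2}}\bigr)^{m_i}\equiv\left(\frac{a}{p_i}\right)^{m_i}=\left(\frac{a}{p_i}\right)\pmod{p_i}.\]
\end{itemize}
Since $\left(\frac{a}{n}\right)=\pm1$, $a$ is an Euler liar iff every component $a^{\frac{n-1}{2}}\bmod p_i$ equals $\left(\frac{a}{n}\right)=\prod_i\left(\frac{a}{p_i}\right)$. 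The Legendre symbols of $a$ modulo the $p_i$ are independent and uniformly distributed over $\Z_n^*$. Hence each independent sign constraint on them cuts the count by a factor $2$.

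\textbf{Class B1.} Let $S$ be the set of the $h$ primes with maximal valuation. There is at least one prime outside $S$, whose component is $1$, so we need $\left(\frac{a}{n}\right)=1$. We also need $\left(\frac{a}{p_i}\right)=1$ for each $p_i\in S$. Together these are equivalent to the $h$ conditions on $S$ plus the single condition that the product of the Legendre symbols over the $k-h\ge1$ primes outside $S$ equals $1$. These $h+1$ constraints are independent, so there are $\varphi(n)/2^{h+1}$ liars, all with Jacobi symbol $1$.

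\textbf{Class B2.} Every component equals $\left(\frac{a}{p_i}\right)$. The condition is that all Legendre symbols equal a common $\varepsilon\in\{\pm1\}$ and that $\varepsilon^k=\varepsilon$. By Lemma~\ref{lem:parity}, $k$ has the parity of $i(n)$, which is odd, so $\varepsilon^k=\varepsilon$ holds automatically. Each choice of $\varepsilon$ gives $\varphi(n)/2^k$ elements, so there are $\varphi(n)/2^{k-1}$ liars in total, split equally between Jacobi symbol $1$ and $-1$.

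The main obstacle is the oddness of $m_i$ in the key computation, and the use of Lemma~\ref{lem:parity} to dispose of the $\varepsilon^k$ condition; after that the count is routine.
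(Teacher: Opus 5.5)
Your proof is correct and follows essentially the paper's argument. Both use CRT componentwise, Euler's criterion at the primes of maximal $2$-valuation, triviality of the exponentiation at the other primes, and Lemma~\ref{lem:parity} to get $k$ odd in class B2; your odd cofactor $m_i$ replaces the paper's passage from $a^{\frac{n-1}{2}}$ to $a^{\frac{\lambda(n)}{2}}$. The only real difference is bookkeeping in class B1: you count sign vectors using the uniform distribution of $\bigl(\left(\frac{a}{p_1}\right),\dots,\left(\frac{a}{p_k}\right)\bigr)$ over $\{\pm1\}^k$, whereas the paper fixes $S$ and multiplies by the explicit element $a_j$ (a generator at a non-maximal prime, $1$ elsewhere) to show that the Jacobi symbols on $S$ split evenly.
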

\begin{proof}
(2) We start with class B2 as it is the easier case.
As all
$p_i-1$
have the same valuation at 2 as
$\lambda(n)$,
$a^{\frac{\lambda(n)}{2}}\pmod
n$ attains all square roots of 1 equally often,
in particular
$\pm
1$ are attained
$\frac{1}{2^k}\varphi(n)$
times each.
If
$a^{\frac{\lambda(n)}{2}}\equiv
1\pmod
n$ then by CRT
$a^{\frac{(p_i-1)}{2}}=1
\pmod
p_i$
for all
$1\le
i
\le
k$ and thus $a$ is a square modulo each
$p_i$
and by multiplicativity of the Jacobi symbol also
$\left(\frac{a}{n}\right)=1$.
Similarly,
if
$a^{\frac{\lambda(n)}{2}}\equiv
-1\pmod
n$ then $a$ is a non-square modulo each
$p_i$.
By
Lemma~\ref{lem:parity}
we have that $k$ is odd and thus
$\left(\frac{a}{n}\right)=(-1)^k=-1$.
This shows that there are
$2\frac{1}{2^k}\varphi(n)$
Euler liars in class B2 and both Jacobi symbols appear equally often.

(1) Let
$v_2(p_i-1)=v_2(\lambda(n))$
and let
$v_2(p_j-1)<v_2(\lambda(n))$.
Then
$\lambda(n)/2\equiv
0
\pmod{p_j-1}$
and
$a^{\frac{\lambda(n)}{2}}
\equiv
a^{\frac{(p_j-1)}{2}}\equiv
1\pmod{p_j}$
for all
$a\in
\Z_n^*$,
while both cases of
$a^{\frac{\lambda(n)}{2}}
\equiv
a^{\frac{(p_i-1)}{2}}\equiv
\pm
1\pmod{p_i}$
appear equally often.
Hence,
only the
$2^h$
square roots of 1 stemming from those
$p_i$
attaining the maximum valuation at 2 appear as
$a^{\frac{\lambda(n)}{2}}\pmod
n$ and in particular -1 does not appear by CRT.
Therefore,
Euler liars must match on $+1$.
Let
$S=\left\{a
\in
\Z_n^*\left|
a^{\frac{\lambda(n)}{2}}\equiv
1\pmod
n\right\}\right.$,
then
$|S|=\frac{1}{2^h}\varphi(n)$.
Let
$g_j$
be a generator of
$\Z_{p_j}^*$
and define
$a_j
\in
\Z_n^*$
via the CRT equations
$a_j\equiv
g_j\pmod{p_j}$
and
$a_j\equiv
1\pmod{p_\ell}$
for
$\ell
\ne
j$.
Then
$a_j\in
S$ by the definition of
$p_j$
and
$\left(\frac{a_j}{n}\right)=\left(\frac{g_j}{n}\right)=-1$
by multiplicativity of the Jacobi symbol and the definition of
$a_j$.
Let
$s\in
S$,
then
$a_js
\in
S$ and
$\left(\frac{a_js}{n}\right)=\left(\frac{a_j}{n}\right)\left(\frac{s}{n}\right)
=-\left(\frac{s}{n}\right)$,
showing that the Jacobi symbols in $S$ split evenly into
$\pm
1$.
This means that only half of the
$\frac{1}{2^h}\varphi(n)$
elements in $S$ are Euler liars.
\end{proof}

We
 summarize
the results in
Table~\ref{tab:car_eliars}.

\begin{table}
\begin{tabular}{|ccc|c|c|}
\hline
Class
&
parity of index $i(n)$
&
$h<k$?
&
\#
Euler liars
                    &
Jacobi symbols
         \\
\hline
A
   &
even
  &
N.A.
&
$
\frac{1}{2}
\varphi(n)
$
      &
all $ 1 $
              \\[1mm]
B1
  &
odd
  &
yes
  &
$
\frac{1}{2^{h+1}}
\varphi(n)
$
      &
all $ 1 $
              \\[1mm]
B2
  &
odd
  &
no
   &
$
\frac{1}{2^{k-1}}
\varphi(n)
$
&
half $ 1 $,
half $ -1 $
\\[1mm]
\hline
\end{tabular}
\caption{Number of Euler liars for the classes of Carmichael numbers. As in
Definition~\ref{def:classB},
$h$ is the number of factors
$p_i$
with
$v_2(\lambda(n))
=
v_2(p_i-1)$
for
$n=\prod_{i=1}^kp_i$.}
\label{tab:car_eliars}
\end{table}

\begin{remark}
Note that having one
$p_i\equiv
1
\pmod
4$ implies that
$n\equiv
1
\pmod
4$ as
$4|(p_i
-
1)|(n-1)$.
Hence,
$n\equiv
3
\pmod
4$ implies that
$p_i\equiv
3
\pmod 4$ for all $2\le i\le k$,
Table~\ref{tab:car_eliars}
shows that the two cases where $n$ being an Euler pseudoprime implies that it is a
strong pseudoprime,
namely for $n
\equiv
3
\pmod 4$ and for Euler liars matching on $-1$,  are both for class B2.
\end{remark}

In the next section we consider many examples of Carmichael numbers.
The first two rows of
Table~\ref{tab:types_stats}
there show that the split between class A and B is almost even,
with slightly more elements in class B when restricting to Carmichael numbers that
are not products of Carmichael numbers (there called ``atomic'').
When selecting Carmichael numbers that have a high chance of being Euler pseudoprimes
for many bases,
class A is obviously better than class B,
also B1 is never worse than B2.
The latter is obvious for small $h$,
but for $h = k-1$ the
$2^{h+1}$
in
Lemma~\ref{lem:classB}
would give a smaller proportion than for B2.
We now show that this case does not happen.

\begin{lemma}
Let
$n=\prod_{i=1}^k
p_i$
be a Carmichael number and let $h$ be the number of
$p_i$
with
$v_2(p_i
-1) =
v_2(\lambda(n))$.
Then
$h\in
\{1,2,3,\ldots
,
k-2,k\}$.
\end{lemma}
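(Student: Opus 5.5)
We must show $h \neq k-1$; note $h\ge 1$ always holds, as observed before Definition~\ref{def:classB}. Suppose, for contradiction, that $h = k-1$. Then exactly one prime factor, say $p_k$, has $v_2(p_k-1) < v = v_2(\lambda(n))$, while $p_i = 1 + 2^v r_i$ with $r_i$ odd for $i = 1, \ldots, k-1$.

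The plan is to compare $n$ and $p_k$ modulo $2^{v+1}$. Since $2^v \mid \lambda(n)$ and $\lambda(n) \mid n-1$ by Lemma~\ref{lem:alter_carm}, we have $n \equiv 1 \pmod{2^v}$. On the other hand, reducing modulo $2^{v+1}$, each factor satisfies $p_i \equiv 1 + 2^v \pmod{2^{v+1}}$ for $i<k$, and $(1+2^v)^2 \equiv 1 \pmod{2^{v+1}}$ because $v\ge 1$. Hence
\[
n \equiv (1+2^v)^{k-1} p_k \pmod{2^{v+1}},
\]
and in particular $n \equiv p_k$ or $n \equiv (1+2^v)p_k$ modulo $2^{v+1}$. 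Either way $n \equiv p_k \pmod{2^v}$, because $(1+2^v)p_k \equiv p_k \pmod{2^v}$.

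Combining the two congruences gives $p_k \equiv n \equiv 1 \pmod{2^v}$, so $v_2(p_k - 1) \ge v$. This contradicts the choice of $p_k$, and therefore $h = k-1$ is impossible, which leaves exactly $h \in \{1, \ldots, k-2, k\}$. The only subtle point is remembering that $p_k - 1$ may have valuation strictly less than $v$, so the argument must work modulo $2^v$ and not with exact valuations. Actually the argument is simpler still: modulo $2^v$ every $p_i$ with $i<k$ is $\equiv 1$, so $n \equiv p_k \pmod{2^v}$ directly, and the computation modulo $2^{v+1}$ is unnecessary.
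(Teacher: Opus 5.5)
Your argument is correct and is essentially the paper's proof. The paper reduces $n$ modulo $2^{m+1}$, with $m=v_2(p_j-1)<v_2(\lambda(n))$, to get $n\equiv 1+2^m \pmod{2^{m+1}}$, contradicting $\lambda(n)\mid(n-1)$; this is the same observation as your $n\equiv p_k \pmod{2^v}$, and as you noticed, the detour modulo $2^{v+1}$ is unnecessary.
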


\begin{proof}
For a Carmichael number $n$ it holds that
$v_2(\lambda(n))
=
\max_{1\le
i
\le
k}\{v_2(p_i
-1)\}$,
hence,
$h
\ge
1$.
Let all
$p_i$
except for
$p_j$
attain this maximum and let $m =
v_2(p_j
- 1)$.
Writing $n$ as in the proof of
Lemma~\ref{lem:parity}
shows that $n
\equiv
1 +
2^m
\pmod{2^{m+1}}$,
contradicting that
$\lambda(n)|(n-1)$.
Hence,
there is an even number of
$p_i$
attaining the minimum,
except for the case $h=k$ when an odd number is possible;
the latter numbers are exactly those in class B2.
\end{proof}

Note that this lemma does not imply that $k-h$ is even.
The conditions on the 2-valuations are compatible with having two primes with $m$
and one with $m+1$ for
$v_2(n-1)
=
v_2(\lambda(n))
= m+2$,
giving $k-h = 3$.

In the following section we will consider products of Carmichael numbers.
We can use the classification to determine which class the product falls in if it
is a Carmichael number.

\begin{lemma}\label{le:classification-products}
Let
$n_1$
and
$n_2$
with
$\gcd(n_1,n_2)=1$
be Carmichael numbers.
The following are necessary conditions and impossibility results for $n =
n_1
n_2$
to be a Carmichael number:
\begin{itemize}
\item If $n_1$ and $n_2$ are both in class A and $n$ is a Carmichael number, then
\begin{itemize}
\item $n$ is in class A if $\min\{v_2(n_1 - 1), v_2(n_2-1)\} > \max\{v_2(\lambda(n_1)), v_2(\lambda(n_2))\}$;
\item $n$ is in class B1 if  $ \min\{v_2(n_1 - 1), v_2(n_2-1)\} = \max\{v_2(\lambda(n_1)), v_2(\lambda(n_2))\}$ and $v_2(n_1 - 1) \neq v_2(n_2-1)$.
\end{itemize}
Otherwise $n$ is not a Carmichael number.
\item If $n_1$ is in class A, $n_2$ is in class B, and $n$ is a Carmichael number, then
\begin{itemize}
\item $n$ is in class A if $v_2(n_1 - 1) = v_2(n_2-1)$;
\item $n$ is in class B1 if $v_2(n_1 - 1) > v_2(n_2-1)$ and
$v_2(\lambda(n_1))<
v_2(\lambda(n_2))$;
\item $n$ is in class B1 or B2 if $v_2(n_1 - 1) > v_2(n_2-1)$ and
$v_2(\lambda(n_1))=
v_2(\lambda(n_2))$.
\end{itemize}
Otherwise $n$ is not a Carmichael number.
\item If $n_1$ and $n_2$ are both in class B and $n$ is a Carmichael number, then
\begin{itemize}
\item $n$ is in class A if $v_2(n_1 - 1) = v_2(n_2-1)$.
\end{itemize}
Otherwise $n$ is not a Carmichael number.
\end{itemize}
\end{lemma}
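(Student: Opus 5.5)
The plan is to reduce everything to a comparison of $2$-adic valuations. By Lemma~\ref{lem:alter_carm}, $n=n_1n_2$ is a Carmichael number if and only if $\lambda(n)\mid(n-1)$. Since $\gcd(n_1,n_2)=1$ and both are squarefree, $n$ is squarefree and $\lambda(n)=\lcm(\lambda(n_1),\lambda(n_2))$. Hence a necessary condition for $n$ to be Carmichael is $v_2(n-1)\ge L$, where $L:=v_2(\lambda(n))=\max\{v_2(\lambda(n_1)),v_2(\lambda(n_2))\}$. When $n$ is Carmichael, its class is A exactly when $v_2(n-1)>L$, and B exactly when $v_2(n-1)=L$.

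Write $a_j=v_2(n_j-1)$ and $l_j=v_2(\lambda(n_j))$. Because $\lambda(n_j)\mid n_j-1$, we have $a_j\ge l_j$, with $a_j>l_j$ iff $n_j$ is in class A and $a_j=l_j$ iff $n_j$ is in class B. The key identity is
\[
n-1=(n_1-1)(n_2-1)+(n_1-1)+(n_2-1).
\]
It gives $v_2(n-1)=\min\{a_1,a_2\}$ if $a_1\ne a_2$, and $v_2(n-1)>a_1$ if $a_1=a_2$, since the two linear terms then add to something of higher valuation and the product term has valuation $2a_1$.

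Then I run through the cases in the statement.

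\textbf{Both in class A.} If $a_1=a_2$, then $v_2(n-1)>a_1>\max\{l_1,l_2\}=L$, so $n$ is in class A. If $a_1\ne a_2$, then $v_2(n-1)=\min\{a_1,a_2\}$. This gives class A when $\min>L$, an odd index when $\min=L$, and failure of $\lambda(n)\mid(n-1)$ when $\min<L$.

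\textbf{Class A times class B.} Here $a_2=l_2$.
\begin{itemize}
\item If $a_1=a_2$, then $l_1<a_1=l_2$, so $L=l_2$ and $v_2(n-1)>L$, giving class A.
\item If $a_1<a_2$, then $v_2(n-1)=a_1<a_2=l_2\le L$, so $n$ is not Carmichael.
\item If $a_1>a_2$, then $v_2(n-1)=a_2=l_2$, which forces $l_1\le l_2$ and an odd index. If $l_1<l_2$, every prime of $n_1$ has $v_2(p-1)\le l_1<L$, so $h<k$ and $n$ is in class B1. If $l_1=l_2$, nothing further is decided, which is exactly the ``B1 or B2'' clause.
\end{itemize}

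\textbf{Both in class B.} Here $a_j=l_j$. If $a_1\ne a_2$, then $v_2(n-1)=\min\{l_1,l_2\}<L$, which is impossible. If $a_1=a_2$, then $v_2(n-1)>a_1=L$, giving class A.

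The main obstacle is excluding B2 in the A--A case with $\min\{a_1,a_2\}=L$. I would argue by contradiction. Suppose $n$ is in class B2. Then every prime of $n$, hence every prime of $n_1$ and of $n_2$, has $v_2(p-1)=L$. In particular $l_1=l_2=L$, and each $n_j$ satisfies the hypothesis of Lemma~\ref{lem:parity}. Since each $n_j$ is in class A, its index is even, so Lemma~\ref{lem:parity} forces its number of prime factors $k_j$ to be even. Then $k=k_1+k_2$ is even, and applying Lemma~\ref{lem:parity} to $n$ itself makes $i(n)$ even, contradicting the odd index. Hence $n$ is in class B1.

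This argument does not need the extra hypothesis $a_1\ne a_2$ in the B1 clause; that hypothesis only reflects that when $a_1=a_2$ the index is automatically even. Finally, I would remark that in every case only necessary conditions are established, since the odd part of $\lambda(n)$ must still divide $n-1$.
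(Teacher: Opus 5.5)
Your proof is correct and its skeleton matches the paper's. Both rest on $v_2(n-1)=\min\{v_2(n_1-1),v_2(n_2-1)\}$ when the two valuations differ, with strict increase when they agree, together with $v_2(\lambda(n))=\max\{v_2(\lambda(n_1)),v_2(\lambda(n_2))\}$, followed by the same case split. Your A--B and B--B cases are argued essentially as in the paper.

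The one genuine difference is how you show that a class-A times class-A product with $\min=\max$ lands in B1 rather than B2.

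\begin{itemize}
\item The paper orders the factors so that $v_2(\lambda(n_1))\ge v_2(\lambda(n_2))$. In the subcase $v_2(n_1-1)>v_2(n_2-1)$ it reads off $v_2(\lambda(n))=v_2(n-1)=v_2(n_2-1)>v_2(\lambda(n_2))$. So the primes of $n_2$ visibly miss the maximal valuation and $h<k$.
\item You argue by contradiction through Lemma~\ref{lem:parity}: a class-A factor whose primes all share one valuation must have an even number of prime factors. Hence $k$ is even, and so $i(n)$ is even.
\end{itemize}

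This is valid, but it uses more machinery than needed. Under the B2 assumption you already have $v_2(\lambda(n_1))=v_2(\lambda(n_2))=L$. Class A then gives $v_2(n_1-1)>L$ and $v_2(n_2-1)>L$, which directly contradicts $\min\{v_2(n_1-1),v_2(n_2-1)\}=L$. The paper's direct argument has the added benefit of identifying which factor's primes fall short of the maximum.

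Your side remark is also right. In the A--A case the hypothesis $v_2(n_1-1)\ne v_2(n_2-1)$ is redundant, because equal valuations force $\min>\max$.
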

\begin{proof}
The proofs for the different cases use that
$v_2(n
-1) =
v_2(n_1n_2
- 1) =
v_2(n_2(n_1
-1) +
n_2
- 1)
\geq
\min
\{v_2(n_1
- 1),
v_2(n_2-1)\}$.
Note that it is larger than the minimum if
$v_2(n_1
- 1) =
v_2(n_2-1)$.
If
$n=n_1
n_2$
is a Carmichael number then
$v_2(\lambda(n))
=
 \max\{v_2(\lambda(n_1)),
v_2(\lambda(n_2))\}$.

{\bf
Let
$n_1$
and
$n_2$
be in class
A.}
By definition,
their indices are even,
i.e.,
$v_2(n_1
-1)
>
v_2(\lambda(n_1))$
and
$v_2(n_2-1)
>
v_2(\lambda(n_2))$.
Let
$v_2(\lambda(n_1))
\geq
v_2(\lambda(n_2))$.

If
$v_2(n_1
-
1)\le
v_2(n_2-1)$
we have that
$v_2(n-1)\ge
\min
\{v_2(n_1
- 1),
v_2(n_2-1)\}
=
v_2(n_1
- 1)
>
v_2(\lambda(n_1))
=
 \max\{v_2(\lambda(n_1)),
v_2(\lambda(n_2))\}=v_2(\lambda(n))$.
Hence $n$ is in class A.
This case is included in
$\min\{v_2(n_1
- 1),
v_2(n_2-1)\}
>
\max\{v_2(\lambda(n_1)),
v_2(\lambda(n_2))\}$.

If
$v_2(n_1
- 1)
>
v_2(n_2-1)$
we have that
$v_2(n-1)
=
\min
\{v_2(n_1
- 1),
v_2(n_2-1)\}
=
v_2(n_2-1)
$.
This is larger than
$v_2(\lambda(n))$,
i.e.
$n$ is in class A,
iff
$\min\{v_2(n_1
- 1),
v_2(n_2-1)\}
>
\max\{v_2(\lambda(n_1)),
v_2(\lambda(n_2))\}$
and equals
$v_2(\lambda(n))$,
i.e.,
$n$ is in class B,
iff
$\min\{v_2(n_1
- 1),
v_2(n_2-1)\}
=
\max\{v_2(\lambda(n_1)),
v_2(\lambda(n_2))\}$.
In the latter case
$v_2(n_2-1)
>
v_2(\lambda(n_2))$
implies
$v_2(\lambda(n_1))
>
v_2(\lambda(n_2))$
and thus $n$ is in class B1.

Otherwise
$v_2(n-1)
<
v_2(\lambda(n))$
and $n$ is not a Carmichael number.

{\bf
 Let
$n_1$
be in class A and
$n_2$
be in class
B.}
By definition,
$v_2(n_1
-1)
>
v_2(\lambda(n_1))$
and
$v_2(n_2-1)
=
v_2(\lambda(n_2))$.

If
$v_2(n_1
- 1) =
v_2(n_2-1)$
we have that
$v_2(n-1)>v_2(n_2-1)
=v_2(\lambda(n_2))
=
v_2(\lambda(n))$,
because
$v_2(n_2
- 1) =
v_2(n_1-1)
 >
v_2(\lambda(n_1))$
implies that
$v_2(\lambda(n_1))
<
v_2(\lambda(n_2))
 =
v_2(\lambda(n))$,
and $n$ is in class A.

If
$v_2(n_1
- 1)
>
v_2(n_2-1)$
we have that
$v_2(n-1)
=
v_2(n_2-1)
=v_2(\lambda(n_2))\le
\max\{v_2(\lambda(n_1)),
v_2(\lambda(n_2))\}
=
v_2(\lambda(n))$.
If $n$ is a Carmichael number we must have equality,
i.e.,
$n$ is in class B,
which requires
$v_2(\lambda(n_1))
\le
v_2(\lambda(n_2))$.
If
$v_2(\lambda(n_1))
<
v_2(\lambda(n_2))$
there are at least two prime factors
$p_1|n_1$
and
$p_2|n_2$
with
$v_2(p_1-1)
<
v_2(p_2-1)$
and thus $n$ is in class B1.
If
$v_2(\lambda(n_1))
=
v_2(\lambda(n_2))$
the result is in B2 if all prime factors
$p_i$
of
$n_1$
and
$n_2$
have the same
$v_2(p_i
-1)$ and otherwise in B1.

If
$v_2(n_1
- 1)
<
v_2(n_2-1)$
we have that
$v_2(n-1)=v_2(n_1-1)
<
v_2(n_2-1)
=
v_2(\lambda(n_2))
\le \max\{v_2(\lambda(n_1)), v_2(\lambda(n_2))\} = v_2(\lambda(n))$, contradicting that
$n$ is a Carmichael number.

{\bf
 Let
$n_1$
and
$n_2$
be in class
B.}
By definition,
$v_2(n_1-1)
=
v_2(\lambda(n_1))$
and
$v_2(n_2-1)
=
v_2(\lambda(n_2))$.

If
$v_2(n_1
- 1) =
v_2(n_2-1)$,
we have
$v_2(n-1)
>
v_2(n_1-1)
=
v_2(\lambda(n_1))
=
v_2(\lambda(n))$,
so $n$ is in class A.
In all other cases we have
$v_2(n
-1)
<
v_2(\lambda(n))$,
contradicting that $n$ is a Carmichael number.
\end{proof}

\section{Construction of Euler Pseudoprimes}\label{sec:construct_carmichael}
In this section we will explain why we look towards Carmichael numbers to find Euler
pseudoprimes.
Then we will show how we created more and stronger Euler pseudoprimes by multiplying
them and give arguments for the choices we made in creating these.
To substantiate these arguments we will show some statistics on the found Euler pseudoprimes.
Furthermore we will share some interesting observations that we have made on the Euler
pseudoprimes we find in this way.

\subsection{Focus on Carmichael numbers}
By
Lemma~\ref{thm:
eulerfermat}
any Euler liar is also a Fermat liar,
making Carmichael numbers obvious choices for good Euler pseudoprimes,
beside the RSA application.
To test this approach,
we started the search for Euler pseudoprimes by looking at all composite numbers less
than some bound.
We quickly noticed that any composite number that survived for more than a couple
of bases was a Carmichael number.
Specifically when looking at all composite numbers up to
$10^{10}$,
any number that survives till $17$ or higher is a Carmichael number.

This is corroborated by our analysis on Carmichael numbers in
Section~\ref{ch:carmichael_numbers},
where we showed that Carmichael numbers in class A have exactly
$\frac{1}{2}\varphi(n)$
Euler liars.
By
Lemma~\ref{lem:max-phi}
this is the maximum number of Euler liars a composite number can have.

We will consider products of Carmichael numbers that are Carmichael numbers themselves.
In order to have a well-defined notion of how many Carmichael numbers are involved
in such a product we need a name for the simplest building block.

\begin{definition}[Atomic Carmichael number]
A Carmichael number that is not a product of other Carmichael numbers is called an
{\em
atomic Carmichael
number}.
\end{definition}

We use Shallue and Webster's list of Carmichael numbers up to
$10^{24}$~\cite{2025/webster}
as our input Carmichael numbers to test if they are good Euler pseudoprimes.
We first filtered them to select only the atomic Carmichael numbers.
Next,
since we are looking for Euler pseudoprimes that survive as many small prime bases
as possible,
we are not interested in Carmichael numbers that have a small factor,
since
(\ref{eq:ProofSSAssumption})
fails when $a$ is a prime factor of $n$.
Thus we filter our list of Carmichael numbers by removing all numbers whose smallest
factor is less than 150.
Note that some of the statistics we show are still for all atomic Carmichael numbers,
but most of the results will focus on the ones that do not have small factors.

We introduce the following notation for Euler pseudoprimes that pass Euler's criterion
exactly up to prime base $a$.
\begin{definition}[$\epsp(a)$]
The set of Euler pseudoprimes that pass all prime bases from 2 up to and including
$a$,
where $a$ is prime,
but not the next prime after $a$ is denoted by
$\epsp(a)$.
\label{def: epsp(a)}
\end{definition}

Using the filtered down list of Carmichael numbers we create sets of Euler pseudoprimes
that survive exactly up to a certain base $a$ and not beyond,
i.e.,
sets
$\epsp(a)$
for a prime $a$.
We only save the ones for $a
\geq
37$,
which are the ones that survive at least the first 12 primes.
We have found $129092$ Euler pseudoprimes that survived at least until $a = 37$,
and that have prime factors
$>150$.
For full numbers see the second column of
Table~\ref{tab:epsp_numbers}.
The best ones we have found are $$649155636985808498298481
\text{
and
}
663228827208624759722641$$ which both survive up to base $113$,
the 30-th prime base.

\begin{table}[]
\centering
\begin{tabular}{R{0.7cm}|R{1cm}|R{1.5cm}|R{1.75cm}|R{1.7cm}}
$a$
&
$\epsp(a)$
&
$\epsp2(a)$
&
$\epsp3_{[1,2]}(a)$
&
 $\epsp4_{[2]^2}(a)$
\\
\hline
37
&
61\,079
 &
44
&
0
&
0\\
41
&
31\,405
 &
185\,460
&
68
&
633
\\
43
&
16\,867
 &
143\,171
&
1\,525\,406
&
18\,381\,423\\
47
&
9\,102
 &
84\,244&
1\,351\,736&
20\,850\,077
\\
53
&
4\,628
 &
45\,926
&
851\,904&
14\,475\,599
\\
59
&
2\,502
 &
24\,074
&
473\,833
&
8\,441\,754\\
61
&
1\,176
 &
12\,357
&
 249\,287&
4\,548\,214
\\
67
&
668
&
6\,197
&
124\,626
&
2\,275\,521
\\
71
&
320
 &
3\,102
&
63\,096&
1\,156\,495
\\
73
&
157
&
1\,608
&
31\,747
&
583\,013
\\
79
&
100
&
820
&
15\,875
&
293\,754
\\
83
&
43
&
406
&
7\,816&
147\,010
\\
89
&
13
&
189
&
3\,956
&
73\,612
\\
97
&
9
&
102
&
2\,022
&
37\,783\\
101
&
5
&
49
&
1\,002
&
18\,343\\
103
&
3
&
33
&
509
&
9\,192
\\
107
&
1
&
18
&
239
&
4\,529
\\
109
&
0
&
3
&
136
&
2\,357
\\
113
&
2
&
3
&
60
&
1\,139
\\
127
&
0
&
3
&
40
&
568\\
131
&
0
&
0
&
22
&
292\\
137
&
0
&
0
&
5
&
151
\\
139
&
0
&
1
&
3
&
82
\\
149
&
0
&
0&
4
&
38
\\
151
&
0
&
0&
1&
17
\\
157
&
0
&
0&
2&
4
\\
163
&
0
&
0
&
0
&
3
\\
167
&
0
&
0
&
0
&
2
\\
173
&
0
&
0
&
0
&
2
\\
179
&
0
&
0
&
0
&
1
\\
191
&
0
&
0
&
0
&
1
\\
\hline
Total
&
128\,080
 &
507\,810
&
4\,703\,395
&
71\,300\,609
\\
\end{tabular}
\caption{Number of found Euler pseudoprimes (with smallest factor larger than 150) for base larger than 37.
For notation see
Definitions~\ref{def:
epspN}
and
\ref{def:epsp-complex}.
}
\label{tab:epsp_numbers}
\end{table}

Experimentally we have seen that certain constructions of Carmichael numbers,
like
Arnault\cite{Arnault1995}
or
Chernick~\cite{Chernick},
which only construct a subset of the Carmichael numbers,
do not seem to construct interesting Euler pseudoprimes.

Now that we have elements of
$\epsp(a)$,
we will show how we use these to find better ones by multiplying them.

\subsection{Multiplying Euler pseudoprimes}
Multiplying Euler pseudoprimes to create larger ones was an intuitive choice due to
the Jacobi symbol being multiplicative.
In this section we will show that it indeed works out well.

Euler pseudoprimes that are not a product of other Euler pseudoprimes we call atomic
Euler pseudoprimes,
similar as with the atomic Carmichael numbers.

We will now define some notation specific to products of elements of
$\epsp(a)$
for some prime $a$.

\begin{definition}[$\epsp N(a)$]
The set of Euler pseudoprimes that are a product of $N$ atomic Euler pseudoprimes
is denoted by
$\epsp
N(a)$.
The elements of this set pass all prime bases up to and including $a$,
but not the next prime after $a$.
\label{def: epspN}
\end{definition}
Note that
$\epsp(a)$
and
$\epsp1(a)$
are the same and we will omit the 1.

Squaring an Euler pseudoprime does us no favors,
since as soon as the composite number contains a square it is not longer a Carmichael
number.
While the Jacobi symbol
$\left(\frac{a}{n^2}\right)$
is 1 for
$\gcd(a,n)=1$,
we have that
$\lambda(n^2)
=
n\lambda(n)$,
for $n$ a Carmichael number,
and
$n\not|
(n^2
-1)$,
hence
$a^{(n^2-1)/2}
\pmod{n^2}$
has a low chance of being 1.
So we will only focus on multiplying distinct Euler pseudoprimes.
\relax

\subsection{Link to Classification of Carmichael Numbers}\label{sec:link}
Table~\ref{tab:types_stats}
show statistics on the distribution of Carmichael numbers over the different classes
identified in
Section~\ref{ch:carmichael_numbers}.
When looking at the original set of atomic Carmichael numbers we see that the numbers
are pretty balanced between class A and B.
These statistics do not change when only looking at the atomic Carmichael numbers
with factors over $150$.
However,
among the found Euler pseudoprimes,
which are all also Carmichael numbers,
we find an overwhelming proportion of Carmichael numbers in class A and this proportion
increases when considering
$\epsp2(a)$.

\begin{table}
\centering
\begin{tabular}{m{3cm}|c|c|c}
Class
&
A
&
B1
&
 B2\\
\hline
Atomic Carmichael numbers
&153\,917\,210
 &
153\,669\,743&
489\,249\\
\hline
Atomic Carmichael numbers with smallest factor
$>150$
&7\,029\,697
&6\,978\,344
&
271\,231\\
        \hline
Found elements of
$\epsp(a)$
with $a
>
37$ and factor
$>150$
&
128\,063&
17
&
0\\
          \hline
Found elements of
$\epsp2(a)$
with $a
>
37$ and factor
$>150$
&
507\,767
&
43
&
0\\
\end{tabular}
\caption{Statistics on the Carmichael numbers and Euler pseudoprimes we
found per class.
There are
308\,279\,939
Carmichael numbers up to
$10^{24}$
of which
308\,076\,202
are
atomic.}
\label{tab:types_stats}
\end{table}

This makes sense since we saw in
Table~\ref{tab:car_eliars}
that class A has the highest number of Euler liars with
$\varphi(n)/2$
Euler liars,
so almost half of all possible bases are Euler liars (slightly less,
as
(\ref{eq:ProofSSAssumption})
fails when $a$ is a prime factor of $n$).
These numbers have chance
$1/2^i$
of passing
(\ref{eq:ProofSSAssumption})
for the $i$ first consecutive primes.
This matches
Table~\ref{tab:epsp_numbers},
where
$\epsp(a)$
for prime $a$ has approximately twice as many elements as
$\epsp(\text{NextPrime}(a))$.
By
Lemma~\ref{le:classification-products},
products that are Carmichael numbers are also likely to be in class A,
explaining the proportion in
Table~\ref{tab:types_stats}
and the exponential decline in sizes of
$\epsp2(a)$
in
Table~\ref{tab:epsp_numbers}.
In
Figure~\ref{fig:log_2_epsp}
we visualize this by plotting
$\log_2(|\epsp2(a)|)$
relative to how many prime bases it survived.

\begin{figure}[h]
\centering
\includegraphics[width=0.9\linewidth]{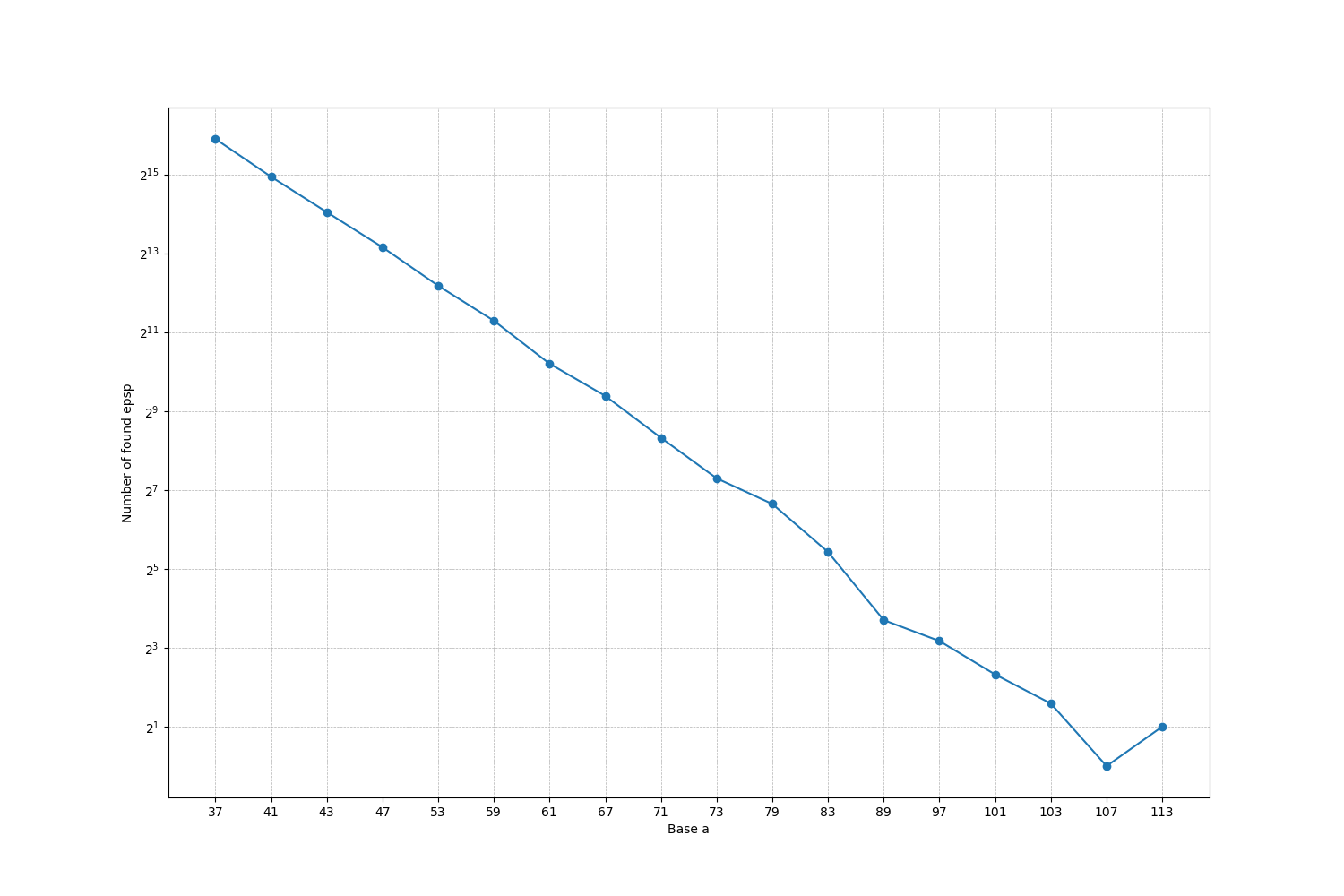}
\caption{Size of $\epsp(a)$ on logarithmic $y$-axis relative to $a$.}
\label{fig:log_2_epsp}
\end{figure}

Note that this trend is not always visible in our results,
e.g.
at the right end of the graph.
As another example,
consider the last column of
Table~\ref{tab:epsp_numbers},
which shows the sizes of
$\epsp4_{[2]^2}(a)$.
(This uses notation introduced in the next section to denote
$\epsp4(a)$
generated as products of two elements of
$\epsp2(a')$.)
Here
$\epsp4_{[2]^2}(47)$
has
{\bf
more}
elements than
$\epsp4_{[2]^2}(43)$.
This is because we are only multiplying
$\epsp2(a)$
for
$a\geq
37$,
so we are missing a lot of elements of
 $\epsp4_{[2]^2}(43)$
which would be created by multiplying
$\epsp4_{[2,2]}(a)$
for $a
<
37$.

\subsection{Choice of Base for Multiplying}\label{sec:choice_basis}
In this section we will explain why we will only multiply Euler pseudoprimes that
survive up to exactly the same base.
We use observations here that we made by creating
$\epsp2(a)$
by simply multiplying numbers of
$\epsp(a)$
(for $a
\geq
37)$ and checking if
(\ref{eq:ProofSSAssumption})
holds.

Experimentally we noticed quickly that multiplying two elements from the same set
$\epsp(a)$
for some $a$ gives us better results than multiplying between
$\epsp(a_1)$
and
$\epsp(a_2)$
with
$a_1
\neq
a_2$.
In
Table~\ref{tab:epsp_37}
we can see an example of what made us decide to focus on multiplying numbers that
survive until the same base.
We can see that when multiplying elements of
$\epsp(37)$
with each other we find better Euler pseudoprimes,
even up to prime base $139$.
Also note that we do not find many elements of
$\epsp2(37)$,
which will be explained later.

\begin{table}[]
\centering
\begin{tabular}{c|c|c}
&
From
$\epsp(37)$
&
from
$\epsp(37)
\cdot
\epsp(a),
a
\geq
41$
\\
\hline
$\#$
potential new numbers
&
$1\,895\,586\,378$&
$4\,157\,347\,387$\\
\hline
$\epsp2(37)$
&
44
&
760\,960\\
$\epsp2(41)$
&
188\,732
&
5
\\
$\epsp2(43)$
&
95\,919
&3
\\
$\epsp2(47)$
&47\,939
&
1\\
$\epsp2(53)$
&
24\,071
&2\\
$\epsp2(59)$
&12\,202
&1\\
$\epsp2(61)$
&
6\,279
&0\\
$\epsp2(67)$
&
3\,059&0\\
$\epsp2(71)$
&
1\,564&0\\
$\epsp2(73)$
&
813
&0\\
$\epsp2(79)$
&
407&0\\
$\epsp2(83)$
&
201
&0\\
$\epsp2(89)$
&
81
&0\\
$\epsp2(97)$
&
46
&0\\
$\epsp2(101)$
&
18
&0\\
$\epsp2(103)$
&
18
&0\\
$\epsp2(107)$
&
6
&0\\
$\epsp2(109)$
&
0
&0\\
$\epsp2(113)$
&
0
&0\\
$\epsp2(127)$
&
2
&0\\
$\epsp2(139)$
&
1
&0\\
\hline
Total
&381\,402
&
760\,972
\\
$\%$
again Euler pseudoprime
&
$0.020\%$
&
$0.018
\%$\\
\end{tabular}
\caption{Comparing creating $\epsp2(a)$ from either same bases or different bases from $\epsp(37)$, with smallest factor larger than 150.}
\label{tab:epsp_37}
\end{table}

Meanwhile when we look at the multiplication of all elements of
$\epsp(37)$
with all other
$\epsp(a)$
for $a
\geq
41$ we find in essence only new Euler pseudoprimes that survive up to $37$ and hardly
any that go beyond.
Multiplying elements from different bases does result in an absolute higher number
of Euler pseudoprimes (though the fraction of surviving is slightly lower),
but they tend to hardly ever survive past the original base.
If one needs a lot of elements that survive to the same base though this might be
useful.
We explain the first part in the following lemma.

\begin{lemma}\label{le:one-more}
Let
$n_1\in
\epsp
N_1(a)$
and
$n_2
\in
\epsp
N_2(a)$
both be Carmichael numbers in class $A$.
Let $b$ the next prime after $a$ and let
$\gcd(b,n_1)
=\gcd(b,n_2)=1$.
Then if the product $n =
n_1n_2$
is a Carmichael number in class A,
it is an Euler pseudoprime for all primes up to and including at least $b$.
\end{lemma}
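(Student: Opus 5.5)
Since $n$ is a Carmichael number in class A, Lemma~\ref{lem:class_a} applies directly: $a'^{\frac{n-1}{2}}\equiv 1 \pmod n$ for every $a'\in\Z_n^*$. So for a base $c$ coprime to $n$, $c$ is an Euler liar for $n$ exactly when $\left(\frac{c}{n}\right)=1$. By multiplicativity of the Jacobi symbol in the denominator,
\[
\left(\frac{c}{n}\right)=\left(\frac{c}{n_1}\right)\left(\frac{c}{n_2}\right).
\]
The whole proof therefore reduces to reading off the Jacobi symbols $\left(\frac{c}{n_i}\right)$ for each prime $c\le b$. Coprimality is not an issue: $c\le a$ divides neither $n_i$, since $n_i$ passes base $c$, and $\gcd(b,n_i)=1$ is assumed.

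We apply the same reasoning to $n_1$ and $n_2$ separately. Each is a class A Carmichael number, so again $c^{\frac{n_i-1}{2}}\equiv 1\pmod{n_i}$ for all $c$ coprime to $n_i$. Hence $n_i$ is an Euler pseudoprime to base $c$ if and only if $\left(\frac{c}{n_i}\right)=1$.

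For every prime $c\le a$, membership $n_i\in\epsp N_i(a)$ gives $\left(\frac{c}{n_i}\right)=1$ for $i=1,2$. The product is then $1$, so every prime $c\le a$ is an Euler liar for $n$.

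For the base $b$, the definition of $\epsp N_i(a)$ says $n_i$ fails base $b$. Since $\gcd(b,n_i)=1$, this failure cannot come from $b$ dividing $n_i$. By the previous paragraph it must mean $\left(\frac{b}{n_i}\right)=-1$, since $b^{\frac{n_i-1}{2}}\equiv 1$. This holds for both $i=1,2$, so $\left(\frac{b}{n}\right)=(-1)(-1)=1\equiv b^{\frac{n-1}{2}}\pmod n$. Hence $b$ is also an Euler liar for $n$.

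The only subtle step is this last one: turning "fails at $b$" into "Jacobi symbol $-1$". It relies on the class A hypothesis for each factor, which forces the power side to equal $1$, together with the coprimality hypothesis, which rules out failure via a shared factor. Everything else is multiplicativity.
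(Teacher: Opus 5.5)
Your proof is correct and is essentially the paper's argument. Class A forces the power side to equal $1$ for $n_1$, $n_2$ and $n$. That turns ``passes base $c\le a$'' into $\left(\frac{c}{n_i}\right)=1$ and, using $\gcd(b,n_i)=1$, ``fails base $b$'' into $\left(\frac{b}{n_i}\right)=-1$, and multiplicativity of the Jacobi symbol finishes. Your explicit remark that passing base $c\le a$ already gives $\gcd(c,n_i)=1$ is a small detail the paper leaves implicit.
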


\begin{proof}
Carmichael numbers in class $A$ have even index,
hence,
the exponentiation side of
(\ref{eq:ProofSSAssumption})
is always $1$ for
$a\in
\Z_n^*$.
Because
$n_i
\in
\epsp
N_i(a)$
we know that for all primes
$a'\le
a$ also
$\left(\frac{a'}{n_i}\right)
= 1$.
Because
 (\ref{eq:ProofSSAssumption})
fails for $b$ and the
$n_i$
are coprime to $b$,
we have
$\left(\frac{b}{n_i}\right)
= -1$.

If $n =
n_1n_2$
is a Carmichael number in class $A$ then
$a^{\frac{n-1}{2}}
\equiv
1
\mod
n$ for all
$a\in
\Z_n^*$.
For $a'
\leq
a$ we have
$\left(\frac{a'}{n}\right)
=
\left(\frac{a'}{n_1}\right)
\left(\frac{a'}{n_2}\right)
= 1
\cdot
1 =
 1$,
so $n$ passes
 (\ref{eq:ProofSSAssumption})
for these bases,
and
$\left(\frac{b}{n}\right)
=
\left(\frac{b}{n_1}\right)
\left(\frac{b}{n_2}\right)
= -1
\cdot
(-1) =
 1$.
Thus $b$ is an Euler liar.
\end{proof}

Note that the product of two Carmichael numbers from class A is not always a Carmichael
number,
but by
 Lemma~\ref{le:classification-products}
the overwhelming majority of the products that are Carmichael numbers will be in class
A.
By
Table~\ref{tab:types_stats}
most factors,
i.e.,
elements of
$\epsp
N(a)$,
are in class A,
as the rest usually failed at a smaller base already.
This also explains why we only found only a few elements of
$\epsp2(37)$
(see
Table~\ref{tab:epsp_numbers}),
since we do not multiply elements of
$\epsp(a)$
for $a
<
37$.

On the other hand,
when multiplying
$n_1
\in
\epsp
N_1(a_1)$
with
$n_2
\in
\epsp N_2(a_2)$, with $a_1 < a_2$, if the product $n_1n_2$ is a Carmichael number
in class A then it will continue to survive up to
$a_1$.
But for the next base $a'$ (possibly equal to
$a_2$),
we will always have that
$\left(\frac{a'}{n_1}\right)
= -1$ and
$\left(\frac{a'}{n_2}\right)=1$,
thus
$\left(\frac{a'}{n_1n_2}\right)
= -1$.
Hence,
(\ref{eq:ProofSSAssumption})
does not hold for
$n_1n_2$
at $a'$,
and thus
$n_1n_2$
will be an element of
$\epsp2(a_1)$,
just like
$n_1$.
This explains the large
 number
of
$\epsp2(37)$
in the last column of
Table~\ref{tab:epsp_37}.

\subsection{Choice of Class for Factors}
Figure~\ref{fig:distribution_epsp2}
shows
$\epsp2(a)$
that are products of
$\epsp(37)$,
(except the 1864371897 products that do not even pass base $2$).
Note that for all $a$ up to 139,
$\epsp2(a)$
has at least one element in them.
There is a clear distinction though between two large groups.

\begin{figure}
\centering
\includegraphics[width=1.2\linewidth]{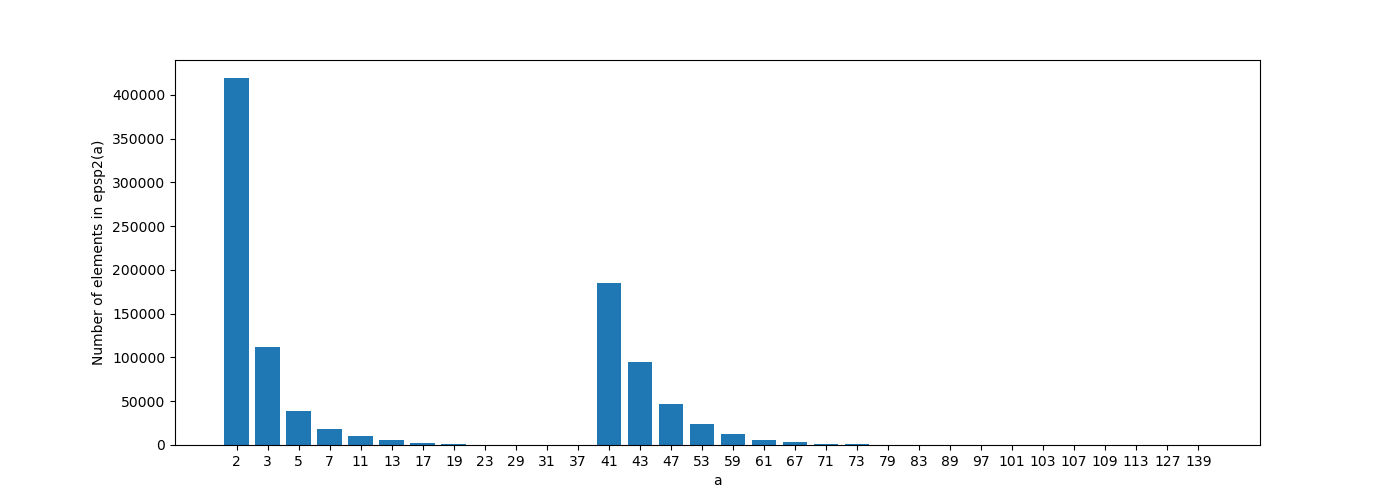}
\caption{Number of elements of $\epsp2(a)$ after multiplying two elements of $\epsp(37)$}
\label{fig:distribution_epsp2}
\end{figure}

The first hump at 2 and its tail are numbers that are either not Carmichael numbers,
but pass some small bases,
or unlucky ones that do not even survive the small bases.
By
Lemma~\ref{le:one-more}
the latter does not happen if both inputs are in class A,
as those survive at least till the next prime,
here 41,
matching the next hump and tail starting at 41.
Numbers on this side are likely in class A

An interesting note is that when one looks at all Carmichael numbers,
i.e.,
not filter out the ones with small factors,
one does find more elements of
$\epsp2(a)$
when multiplying two elements from
$\epsp(a)$
for the same $a$.
These are mostly the numbers that have the next prime larger than $a$ as a factor.
A very small minority of these are in class A and some in class B1.

We note that the proof of
Lemma~\ref{le:one-more}
also works for showing that products of Carmichael numbers involving one or two factors
from class B1 and leading to a Carmichael number in class A also reach the next base
after $a$,
unless that prime is a factor of one of the numbers.

Any product involving an element of B2 has only chance
$1/2^i$
of reaching the $i$-th prime base,
even if both factors individually reached it and the product is a Carmichael number
in class A.
This is because they reached this prime by matching on $1$ or $-1$ and thus need to
have matching signs from the other factor,
because in class A the exponentiation side always equals 1.
However,
we do not observe this because by
Table~\ref{tab:types_stats}
there is no element of
$\epsp(37)$
that is in class B2.

Products landing in class B1 are not guaranteed to reach the same base because the
exponentiation side is not guaranteed to equal 1.
If both factors are in class A with
$v_2(\lambda(n_1))>v_2(\lambda(n_2))$
then
$(n_1n_2-1)/\lambda(n_2)$
is even.
Following the same argumentation as in the proof of
Lemma~\ref{lem:classB},
but with primes
$p_i$
and
$p_j$
replaced by
$n_1$
and
$n_2$
we have that the exponentiation side always holds modulo
$n_2$
but takes on different square roots of 1 modulo
$n_1$.
For primes up to and including the next prime after $a$,
the Jacobi symbol for
$n_1n_2$
is 1,
ensuring that that side passes for the product.
Hence,
each prime base starting with 2 has probability
$1/2^{h}$
of passing,
where $h$ is upper bounded by the number of factors in
$n_1$.

Most atomic Carmichael numbers,
that are also good Euler pseudoprimes,
are in class A.
By
Lemma~\ref{le:classification-products}
a necessary condition for their product to be in class A is that $
\min\{v_2(n_1
- 1),
v_2(n_2-1)\}
>
\max\{v_2(\lambda(n_1)),
v_2(\lambda(n_2))\}$.
Restricting to inputs and outputs in class A misses only very few Carmichael numbers
that can contribute to good Euler pseudoprimes and speeds up selecting inputs with
good success chances.
We use these considerations as the basis for our algorithm to find good Euler pseudoprimes
in
Section~\ref{ch:results}.
\relax

\relax

\subsection{Other Options for Filtering Inputs}

Table~\ref{tab:mod120_stats}
shows the distribution of the residues modulo 120 of atomic Carmichael numbers,
for all of them and for those with smallest prime factor greater than 150,
and the found elements of
$\epsp(a)$
and
$\epsp2(a)$
with $a
>
37$ and smallest prime factor greater than 150.

\begin{table}
\centering
\begin{tabular}{m{3cm}|c|c|c|c|c|c|c}
$n
\pmod{120}$
&
01
&
25
&
49
&61
&
73
&
97
&
others
($<1\%$)\\
\hline
Percentage of atomic Carmichael numbers
 &
93.05
&
1.71
&
0.80
&
2.58
&
0.79
&
0.79
&
0.28\\
        \hline
Percentage of atomic Carmichael numbers with smallest factor
$>150$
&
88.30
&
0
&
1.87
&
4.52
&
1.66
&
1.71
&
1.94\\
        \hline
Percentage of found elements of
$\epsp(a)$
with $a
>
37$ and factor
$>150$
&
98.13
&
0
&
1.87
&
0
&
0
&
0
&
0\\
          \hline
Percentage of found elements of
$\epsp2(a)$
with $a
>
37$ and factor
$>150$
&
99.97
&
0
&
0.03
&
0
&
0
&
0
&
0\\
\end{tabular}
\caption{Statistics on the numbers (mod 120) for atomic Carmichael numbers and found $\epsp(a)$ and $\epsp2(a)$ with $a > 37$ and factor $>150$.}
\label{tab:mod120_stats}
\end{table}

The vast majority of all numbers studied is congruent to 1 modulo 120.
For atomic Carmichael numbers this holds
93.05\%
of the time,
with the second highest number,
but just
2.58\%,
for residue 61 (reaching 1 modulo 4 but not modulo 8),
and the third with
1.71\%
for residue 25.
Unsurprisingly the latter disappears when filtering out numbers with small factors,
as all of these are divisible by 5.
While the filtered numbers still have a peak at residue 1,
is is slightly less pronounciated,
likely due to small prime factors that contributed more to this than to the others.

For the Euler pseudoprimes (elements of
$\epsp(a)$
or
$\epsp2(a)$),
only residues 1 and 49 modulo 120 remain,
with 1 appearing
98.13\%
of the time for elements of
$\epsp(a)$
and
99.97\%
of the time for elements of
$\epsp2(a)$.

This is easy to explain because all of these are in class A or B1,
i.e.,
pass
(\ref{eq:ProofSSAssumption})
for Jacobi symbol 1.
This requires that
$n\equiv
\pm
1
\pmod
8,
n
\equiv
1
\pmod
3$,
and $n
\equiv
\pm
1
\pmod
5$.
This rules out 61,
73,
and 97.
It is no surprise that the remaining residues,
1 and 49,
are 1 modulo 8,
as 7 modulo 8 would imply
$n\equiv
3
\pmod
4$ with $n$ landing in class B2,
none of which made it till prime 37.
Hence,
we can filter any new Carmichael numbers by their residue class modulo 120.

Carmichael numbers congruent to 1 modulo many small primes have Jacobi symbol 1 at
those primes and thus satisfy
(\ref{eq:ProofSSAssumption})
if $n$ is in class A.
It may be tempting to prescribe
$n\equiv
1\pmod
L$ for $L$ some product of small primes,
however,
this may also seem to restrict the input numbers too much.
In
Table~\ref{tab:epsp-1
divisibility per
base}
we show statistics on divisors of $n-1$ for small primes and primepowers.
Note the law of small numbers for statistics for large $a$.

\begin{table}
\centering
\begin{tiny}
\begin{tabular}{r|r|r|r|r|r|r|r|r|r|r|r|r}
$a$
&
$2^4$
&
$2^5$
&
$3^2$
&
$3^3$
&
$5$
&
$5^2$
&
$7$
&
$7^2$
&
$11$
&
$13$
&
$17$
&
$19$
\\
\hline
37
 &
95.07
&
82.61
&
88.53
&
60.59
&
98.16
&
51.68
&
91.86
&
29.21
&
73.39
&
64.82
&
52.45
&
43.41
\\
41
 &
95.08
&
82.72
&
88.14
&
59.78
&
98.12
&
52.51
&
91.75
&
29.43
&
72.44
&
64.46
&
52.14
&
44.14
\\
43
 &
95.09
&
82.33
&
88.35
&
60.18
&
98.15
&
51.51
&
91.85
&
29.15
&
72.78
&
65.00
&
52.49
&
43.33
\\
47
 &
95.03
&
82.29
&
87.74
&
59.03
&
98.04
&
51.98
&
91.98
&
29.17
&
73.61
&
63.32
&
52.58
&
43.97
\\
53
 &
94.73
&
81.98
&
88.55
&
60.33
&
97.90
&
52.36
&
91.68
&
29.08
&
72.08
&
64.15
&
52.85
&
42.91
\\
59
 &
94.24
&
81.57
&
87.37
&
58.31
&
98.20
&
51.72
&
92.21
&
28.94
&
73.82
&
63.59
&
51.98
&
41.93
\\
61
 &
95.24
&
82.31
&
88.18
&
60.12
&
98.30
&
51.45
&
89.80
&
27.89
&
72.11
&
62.16
&
51.45
&
42.26
\\
67
 &
95.98
&
82.04
&
88.83
&
58.38
&
97.90
&
48.35
&
92.37
&
27.10
&
71.26
&
63.77
&
49.85
&
41.02
\\
71
 &
93.75
&
81.88
&
84.06
&
57.19
&
99.06
&
51.25
&
90.31
&
30.94
&
74.38
&
65.00
&
53.44
&
43.75
\\
73
 &
94.90
&
84.08
&
87.26
&
56.05
&
98.09
&
56.89
&
90.45
&
31.21
&
67.52
&
55.41
&
55.41
&
38.22
\\
79
 &
96.00
&
87.00
&
84.00
&
59.00
&
97.00
&
51.00
&
90.00
&
31.00
&
70.00
&
54.00
&
47.00
&
38.00
\\
83
 &
97.67
&
81.40
&
86.05
&
67.44
&
90.70
&
48.84
&
90.70
&
30.23
&
83.72
&
72.09
&
41.86
&
44.19
\\
89
 &
100.00
&
76.92
&
84.62
&
38.46
&
100.00
&
53.85
&
100.00
&
23.08
&
92.31
&
84.62
&
38.46
&
53.85
\\
97
 &
100.00
&
100.00
&
100.00
&
77.78
&
100.00
&
55.56
&
88.89
&
22.22
&
44.44
&
44.44
&
44.44
&
44.44
\\
101
&
100.00
&
100.00
&
80.00
&
60.00
&
100.00
&
40.00
&
100.00
&
0.00
&
80.00
&
60.00
&
60.00
&
20.00
\\
103
&
100.00
&
100.00
&
68.67
&
33.33
&
100.00
&
33.33
&
66.67
&
0.00
&
100.00
&
66.67
&
66.67
&
33.33
\\
107
&
0.00
&
0.00
&
100.00
&
100.00
&
100.00
&
0.00
&
100.00
&
0.00
&
100.00
&
100.00
&
100.00
&
0.00
\\
113
&
100.00
&
0.00
&
100.00
&
100.00
&
100.00
&
0.00
&
100.00
&
50.00
&
50.00
&
100.00
&
100.00
&
50.00
\\
\end{tabular}
\end{tiny}
\caption{Statistics on the divisibility of $n-1$ for $n\in \epsp(a)$ with prime factors $>150$ per base $a$.}
\label{tab:epsp-1 divisibility per base}
\end{table}

The statistics show that many Euler pseudoprimes are congruent to 1 for all or most
small primes.
In fact,
Erdös
in~\cite{ErdosCarmichaelConstruction}
proposed to construct Carmichael numbers that are 1 modulo $L$ for some smooth $L$
so that all
$p_i-1$
divide $L$,
guaranteeing that $n$ is a Carmichael number.
In
Table~\ref{tab:epsp-1
divisibility}
we collect statistics on $n$ modulo $L$ for some choices of $L$ showing that these
$n$ would have been constructed using
Erdös'
method and are good Euler pseudoprimes.
This shows an alternative route to generating large Euler pseudoprimes.

\begin{table}[h]
\centering
\begin{tabular}{l|r}
Statistic
&
Percentage
\\
\hline
\% of $n-1$ divisible by $2^4 \cdot 3^2 \cdot 5 \cdot 7$ &  75.67 \\
\% of $n-1$ divisible by $2^4 \cdot 3^2 \cdot 5 \cdot 7 \cdot 11 \cdot 13$ &35.02 \\
\% of $n-1$ divisible by $2^4 \cdot 3^2 \cdot 5 \cdot 7 \cdot 11 \cdot 17$ & 28.09 \\
\% of $n-1$ divisible by $2^4 \cdot 3^2 \cdot 5 \cdot 7 \cdot 11 \cdot 13 \cdot 17$ & 17.32 \\
\% of $n-1$ divisible by $2^6 \cdot 3^3 \cdot 5 \cdot 7 \cdot 11 \cdot 13 \cdot 17$ & 7.08 \\
\end{tabular}
\caption{Statistics on the divisibility of $n-1$ for $n\in\epsp(a)$ with prime factors $>150$ by certain $L$.}
\label{tab:epsp-1 divisibility}
\end{table}

\section{Finding large Euler pseudoprimes} \label{ch:results}
In this section we will showcase the good Euler pseudoprimes we have found in our
search.
We will start with the algorithm that we have created based on the analysis of
Sections~\ref{ch:carmichael_numbers}
and~\ref{sec:construct_carmichael}.

\subsection{Algorithm for finding Euler pseudoprimes} \label{sec:algorithm}
Just doing all multiplications,
even when getting rid of Carmichael numbers with small factors and for
$a\geq
37$,
very quickly takes too much computational power as the number of combinations grows
quadratically with each factor.
In this section we will discuss how for anything larger than
$\epsp2(a)$
we are still able to quickly find the Euler pseudoprimes that we are looking for.
First,
we restrict to the case identified above of multiplying two numbers in class A and
aiming for class A,
where both factors satisfy
(\ref{eq:ProofSSAssumption})
up to the same prime base $a$.

Second,
we organize our data to facilitate scanning and a series of tests of increasing complexity
to quickly discard products.
We create one file per $(N,a)$ for
$\epsp
N(a)$ and within each file order our numbers by
$v_2(\lambda(n))$.
This means that when doing multiplication between elements
$n_1$
and
$n_2$
we easily ensure that
$\max\{v_2(\lambda(n_1)),
v_2(\lambda(n_2))\}
=
v_2(\lambda(n_1))$.
By
Lemma~\ref{le:classification-products}
we need
$\min\{v_2(n_1
- 1),
v_2(n_2-1)\}
>
\max\{v_2(\lambda(n_1)),
v_2(\lambda(n_2))\}$
to land in class A.
By
$n_1$
being in class $A$ we already have that
$v_2(n_1
- 1)
>
v_2(\lambda(n_1))$.
Because of this we only have to check if
$v_2(n_2-1)
>
v_2(\lambda(n_1))$
to know we are in the correct case.
To make this cheap,
we keep track of
$v_2(n-1),
v_2(\lambda(n))$
of our found Euler pseudoprimes.

After doing this cheap check we compute
$\gcd(n_1,n_2)$
 to
eliminate cases where
$n_1$
and
$n_2$
share a prime factor.
For the remaining numbers we use
Lemma~\ref{lem:alter_carm}
to check that they are Carmichael number by testing if
$\lambda(n)|
(n-1)$.
We speed this up by computing
$\lambda(n)$
as
$\lambda(n)
=
\lcm(\lambda(n_1),\lambda(n_2))$.
We can do this efficiently by also keeping track of the
$\lambda(n)$
for all Euler pseudoprimes we find.
\relax

Now that we are for certain in the case that $n$ is a Carmichael number in class A,
the only thing we have to do is to check till what base it survives.
We know for certain that it survives up to and including
 base
$a$,
but by
Lemma~\ref{le:one-more}
it also for certain survives the next base as well.
So we do not have to start checking until the one after that.
Because we know $n$ has even index we only have to check the Jacobi symbol
 $\left(\frac{a}{n}\right)$
starting at the second prime after $a$.
This also speeds up the process,
since computing the Jacobi symbol is quite cheap.

The exact algorithm we used can be found in
Algorithm~\ref{alg:find_epsp}.

\begin{algorithm}
\caption{\textbf{Finding Class A Euler Pseudoprimes}}
\begin{flushleft}
\textbf{Input:} Prime $a$, $n_1 \in \epsp N_1(a), n_2\in \epsp N_2(a)$  both in class A
with
$v_2(\lambda(n_1))
\geq
v_2(\lambda(n_2))$,
with known
$v_2(n_i-1),
v_2(\lambda(n_i)),
\lambda(n_i)$.
\relax

\textbf{Output:} $a',n, \lambda(n),v_2(n-1),v_2(\lambda(n))$ such that $n = n_1n_2 \in \epsp N(a')$ with $N=N_1+ N_2$ or None if $n$ is not a class A Carmichael number.
\end{flushleft}
\hrulefill
\begin{algorithmic}

\If{$v_2(n_2 - 1) > v_2(\lambda(n_1))$}
\If{$\gcd(n_1,n_2) = 1$}
\relax
\State $\lambda(n) \gets \lcm(\lambda(n_1),\lambda(n_2))$
\State $n \gets n_1 \cdot n_2$
\If{$\lambda(n)|(n-1)$}
\State $a \gets \text{NextPrime}(a)$
\State $b \gets \text{NextPrime}(a)$
\While{$\left(\frac{b}{n}\right)$ = 1}
\State $a \gets b$
\State $b \gets \text{NextPrime}(a)$
\EndWhile
\State \textbf{return} $a,n, \lambda(n),v_2(n-1),v_2(\lambda(n))$
\EndIf
\EndIf
\EndIf
\State \Return None
\end{algorithmic}
\label{alg:find_epsp}
\end{algorithm}

In practice,
we replace the function NextPrime by storing the primes sorted by size in an array.
Instead of passing $a$,
its position is input and the NextPrime calls just increments the index.

\subsection{Search for more Euler pseudoprimes}
In this section we will go over the Euler pseudoprimes that we have found using
Algorithm~\ref{alg:find_epsp}
and the choices we made for the
$N_i$
and structure of building larger numbers.

So far we have reported on products of two atomic Euler pseudoprimes,
both in the same
$\epsp(a_1)$,
giving an element of
$\epsp2(a)$,
which we depict in
Figure~\ref{fig:epsp2}.
\begin{figure}[ht]
\centering
\includegraphics[width=0.4\linewidth]{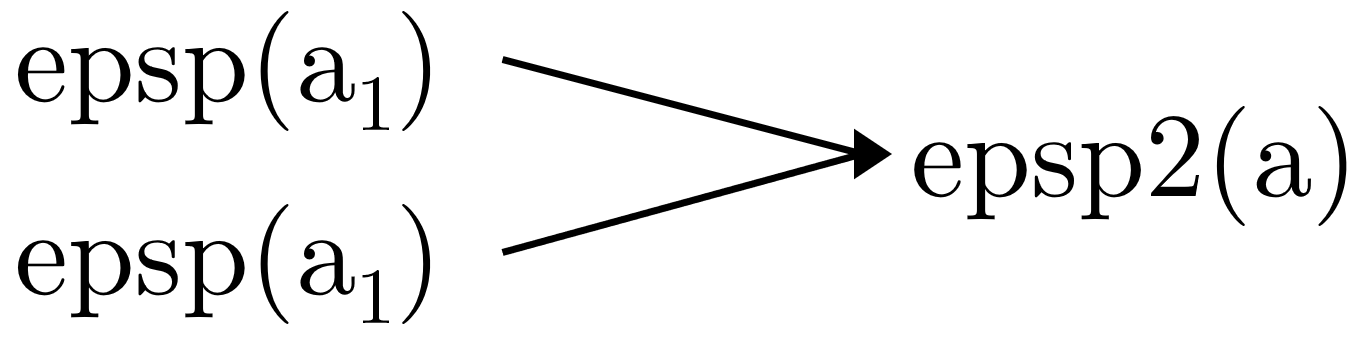}
\caption{Schematic way of how we create $\epsp2(a)$}
\label{fig:epsp2}
\end{figure}

The best Euler pseudoprime found from considering all
$a_1
\ge
37$ is
$${\tiny
64246463686535715315961618118197305491452801}$$
which survives till 139,
the 34th prime base.

Now that we have organized the elements
 of
$\epsp2(a)$
and
$\epsp(a)$
there are two ways to continue.
We start by creating
$\epsp3(a)$
by multiplying elements of
$\epsp(a_2)$
with elements of
$\epsp2(a_2)$,
see
Figure~\ref{fig:epsp3}.
The size of each resulting
$\epsp3(a)$
 can
be found in the fourth column of
Table~\ref{tab:epsp_numbers}.

\begin{figure}[h]
\centering
\includegraphics[width=0.6\linewidth]{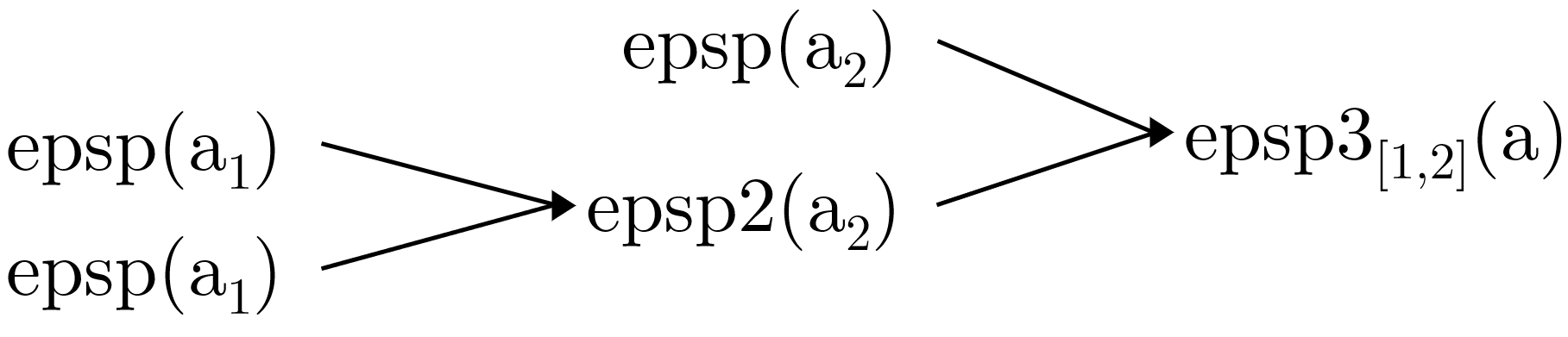}
\caption{Schematic way of how we create $\epsp3_{[1,2]}(a)$}
\label{fig:epsp3}
\end{figure}

In more generality,
for each
$\epsp
N_1(a_1)$
 we
can use
Algorithm~\ref{alg:find_epsp}
on all
$n_1,n_2\in
\epsp
N_1(a_1)$,
where
$n_2$
comes later in the file than
$n_1$.
If the algorithm output is not None,
we append $n,
\lambda(n),v_2(n-1),v_2(\lambda(n))$
to the file for
$\epsp2N_1(a)$.
Once all computations are done,
we sort each resulting file by
$v_2(\lambda(n))$.
In full generality,
we do the above with all pairs in
$\epsp
N_1(a_1)
\times
\epsp
N_2(a_1)$
(note the same
$a_1$),
where we relabel the inputs if necessary such that
$v_2(\lambda(n_1))\ge
v_2(\lambda(n_2))$.
But we note that this notation loses information,
e.g.,
to compute numbers that are products of four atomic Euler pseudoprimes we could multiply
two elements of
$\epsp2(a)$
or one element of
$\epsp(a)$
with one of
$\epsp3(a)$.

We introduce the following notation to indicate more precisely than in
Definition~\ref{def:
epspN}
how a new Euler pseudoprime is constructed from the
$\epsp
N(a)$.

\begin{definition}[$\epsp N_{[T_1,T_2]}(a)$]\label{def:epsp-complex}
The notation
$\epsp
N_{[T_1,T_2]}(a)$
is defined
 recursively.
Each
$T_i$
is of the form
$T_i
=
M_
i
[S_{i1},
S_{i2}]$
with empty argument if
$M_i
\in
\{1,2\}$.
The latter indicates that the factors are elements of
$\epsp$
and
$\epsp2$
while the general case means that the factors are from
 $\epsp
M_{i[S_{i1},S_{i2}]}$.

Each element is the product of $N =
M_1
+
M_2$
atomic Euler pseudoprimes.

The elements of this set satisfy
(\ref{eq:ProofSSAssumption})
for all prime bases up to and including $a$,
but not the next prime after $a$.

If
$T_1
=
T_2$,
we will use
$\epsp
N_{[T_1]^2}(a)$
\end{definition}

See
Figure~\ref{fig:epsp7}
for an example of this definition,
where we show
$\epsp7_{[1,6[3[1,2]]^2]}(a)$.
Note that the two
$\epsp3_{[1,2]}(a)$
are created identically and thus we use a square.

\begin{figure}
\centering
\includegraphics[width=1.0\linewidth]{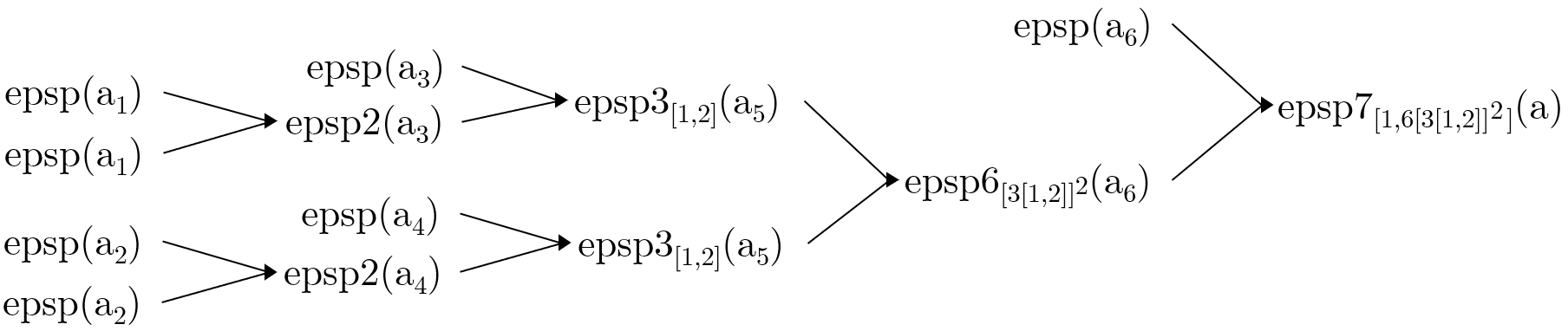}
\caption{Schematic way of how we create $\epsp 7_{[1,6[3[1,2]^2]]}(a)$.}
\label{fig:epsp7}
\end{figure}

Based on experiments we decided to prioritize symmetric constructions.
Hence,
we computed
$\epsp4_{[2]^2}(a')$
for some $a'
\geq
37$,
by multiplying two elements of
$\epsp2(a)$.
In
Figure~\ref{fig:epsp4}
this is schematically shown.
Note that we now consider fewer options than just multiplying 4 atomic Euler pseudoprimes
with each other,
but it does speed up the process and we expect to still find the good ones.
The size of each set can be found in the fifth column of
Table~\ref{tab:epsp_numbers}.
The best one found now is
$$\mbox{\tiny
12734329098687333118404752403795776326472689932237076660290997556917267532133629644627387201}$$
which survives up till 191,
the 43rd base.

\begin{figure}[h]
\centering
\includegraphics[width=0.6\linewidth]{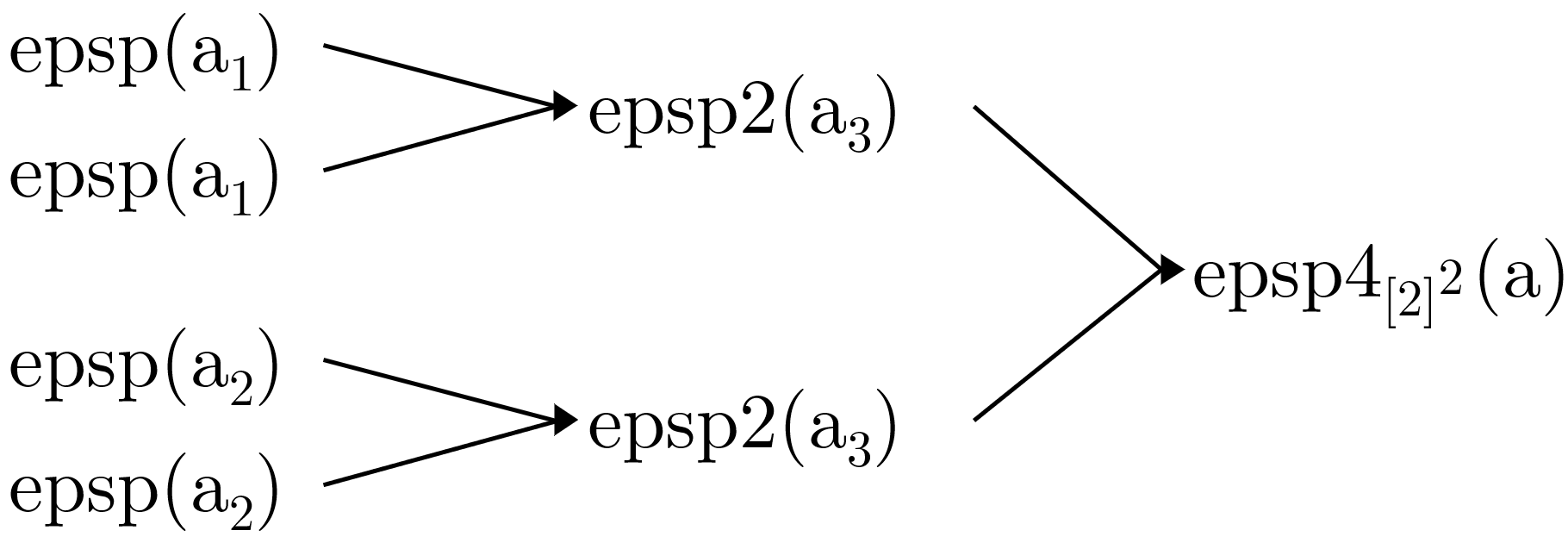}
\caption{Schematic way of how we create $\epsp4_{[2]^2}(a)$}
\label{fig:epsp4}
\end{figure}

Since we now have reached a base larger than 150,
before continuing with
$\epsp8_{[4[2]^2]^2}(a)$,
we filter our
$\epsp4_{[2]^2}(a)$
down to only the Euler pseudoprimes whose smallest factor is greater than 200.
Note that we also from now on will consider larger bases,
starting at 71.
This choice is simply due to the large number of elements we are already finding.
We show the results of these computations in
Table~\ref{tab:epsp_numbers_200}.
The best primes we now have are

\begin{spacing}{0.5}
\begin{align*}
&\mbox{\tiny
785106814369213084281283063908979192118832154301111712456158708772052111844251666955723549037826979}\\
 \vspace{-5pt}
&\mbox{\tiny
1202161408637296926808299884248207740798821548861208208589759167845256874008467201}
\end{align*}
\end{spacing}

{\doublespacing
and}
\begin{spacing}{0.5}
\begin{align*}
&\mbox{\tiny
157714255773897113668884293394951261202240932033830203712863104157081133800659688260341049302324585}\\
 \vspace{-5pt}
&\mbox{\tiny
73314351348527389031410555808199074136270115924340123308057997982961191422511059201}
\end{align*}
\end{spacing}

\smallskip

Both survive till 199,
the 46th prime base.

To continue on this,
we first further reduced the input to only look at Euler pseudoprimes with smallest
factor greater than 300 and then created
$\epsp16_{[8[4[2]^2]^2]^2}(a)$
using our filtered
$\epsp8_{[4[2]^2]^2}(a)$
as input.
The results can be found in
Table~\ref{tab:epsp_numbers_300}.
This is the first time we found fewer Euler pseudoprimes as products than we had inputs.
Disproportionally more products failed to be squarefree.
The main reason for this is that our starting inputs are atomic Carmichael numbers
$<10^{24}$
and Carmichael numbers have factors
$p_i$
so that the
$p_i
-1$ share many factors.
Thus the number of prime factors we are working with is also limited.
Cross-multiplying these numbers combined the factors more,
so at this point most of the numbers have factors in common,
i.e.,
their products will not be squarefree.
We unwittingly sped up this process by filtering to only numbers with smallest factor
larger than $300$.
A high percentage of numbers in
$\epsp16_{[8[4[2]^2]^2]^2}(a)$
have factor 313.
If we try to now create
$\epsp32_{[16[8[4[2]^2]^2]^2]^2}(a)$
we do not find any Euler pseudoprimes,
and all products that would be in class A fail due to not being squarefree.

So,
in a second attempt,
we went back to the cut off of 200 for the smallest factor and did some more searching
for new Euler pseudoprimes.
Even though we could not complete this search due to time,
we did find our best Euler pseudoprime that survives up to $211$,
the $47$th prime base.
It has 1230 bits.

\begin{spacing}{0.5}
\begin{align*}
&\mbox{\tiny
297180253242403184105341150390525401177755979450850674143574534366013877426704534593555915420699343}\\
 \vspace{-5pt}
&\mbox{\tiny
889059073145832891673993841274696136803836427921670679792296684633516565295275589915225338961827697}
\\
 \vspace{-5pt}
&\mbox{\tiny
395774510820672522818570055652099552788792514216529771842300258718279723455898329630840119845009750}
\\
 \vspace{-5pt}
&\mbox{\tiny
75999311171856228688510024383030748708219201214793184433340476692190457601}
\end{align*}
\end{spacing}

\begin{table}[ht]
\noindent\parbox{.44\linewidth}{
\vspace{0pt}
\centering
\captionsetup{width=6cm}
\begin{tabular}{R{0.67cm}|R{1.8cm}|R{2.2cm}}
$a$
&
$\epsp4_{[2]^2}(a)$
&
$\epsp8_{[4[2]^2]^2}(a)$
  \\
\hline
71
 &
211\,137
&
0
\\
73
 &
106\,652
&
18\,414\,583
\\
79
 &
53\,794
 &
13\,858\,342\\
83
 &
26\,930
 &
8\,092\,698
\\
89
 &
13\,494
 &
4\,342\,026
\\
97
 &
6\,770
  &
2\,247\,049
\\
101
&
3\,408
  &
1\,142\,207
\\
103
&
1\,660
  &
575\,054\\
107
&
843
     &
288\,552
\\
109
&
437
     &
144\,946
\\
113
&
193
     &
72\,518
\\
127
&
95
&
36\,431
\\
131
&
45
&
18\,342
\\
137
&
20
&
8\,920
\\
139
&
13
&
4\,491
\\
149
&
5
&
2\,263
\\
151
&
2
&
1\,085
\\
157
&
2
&
523
\\
163
&
2
&
 286
\\
167
&
0
&
 118
\\
173
&
2
&
77
\\
179
&
0
&
36
\\
181
&
0
&
14
\\
191
&
1
&
11
\\
193
&
0
&
 6
\\
197
&
0
&
4
\\
199
&
0
&
2
\\
\hline
Total
&
5\,872\,144
&
49\,250\,584
\\
\end{tabular}
\caption{Number of found class A Euler pseudoprimes (with smallest factor larger than 200). Note that  $\epsp4_{[2]^2}(a)$ is created from $\epsp2(a)$ for $a \geq 37$, while $\epsp8_{[4[2]^2]^2}$ is created with input of $a \geq 71$.}
\label{tab:epsp_numbers_200}
}
\hspace{0.5cm}
\parbox{.44\linewidth}{
\vspace{0pt}
\centering
\captionsetup{width=6cm}
\begin{tabular}{R{0.67cm}|R{2.2cm}|R{2cm}}
$a$
&
$\epsp8_{[4[2]^2]^2}(a)$
&
$\epsp16_{[8[4[2]^2]^2]^2}(a)$
  \\
\hline
83
 &
280\,229
&
0
\\
89
 &
149\,495&
233\,780\\
97
 &
78\,106&
183\,464
\\
101
&
40\,158
&
110\,642
\\
103
&
19\,766
&
59\,993\\
107
&
10\,030
&
31\,283
\\
109
&
5\,010
&
15\,810\\
113
&
2\,504
&
8\,059\\
127
&
1\,240&
3\,993\\
131
&
648
&
2048\\
137
&
325
&
1019\\
139
&
139
&
535
\\
149
&
84
 &
245\\
151
&
33
 &
137\\
157
&
20
 &
66\\
163
&
6
&
28\\
167
&
5
&
15\\
173
&
1
&
6\\
179
&
0
&
4\\
181
&
2
&
0\\
191
&
1
&
1\\
193
&
1
&
0\\
\hline
Total
&
1\,713\,060
&
651\,128
\\
\end{tabular}
\caption{Number of found class A Euler pseudoprimes (with smallest factor larger than 300). Note that  $\epsp4_{[2]^2}(a)$ is created from $\epsp2(a)$ for $a \geq 37$, while $\epsp8_{[4[2]^2]^2}$ is created with input of $a \geq 83$.}
\label{tab:epsp_numbers_300}
}
\end{table}

\relax
\section{Sophie Germinus pseudoprimes}\label{app:anonymous}
The motivation for this paper was to find integers with many Euler liars.
We have seen that Carmichael numbers are indeed the way to go for finding these.
In this section we will introduce the next best option for the search for Euler pseudoprimes,
a certain type of composite number we introduce as the Sophie Germinus
pseudoprimes\footnote{After
a suggestion by Daniel
J.\ Bernstein.}.
\relax
The only other composite numbers that we found in a search over all odd numbers to
$10^6$
were numbers of the following shape,
which we define as Sophie Germinus pseudoprimes.

\begin{definition}
Let
$p_1$
be prime and
$p_2
= 2
p_1
- 1$ also be prime.
Than we call
$p_1$
a Sophie Germinus prime and the product $n =
p_1p_2$
is a Sophie Germinus pseudoprime.
\end{definition}

\begin{lemma} \label{lem:germinus}
Let $n$ be a Sophie Germinus pseudoprime,
with
$p_1
>3$,
then there are
$\varphi(n)/4$
Euler liars in
$\mathbb{Z}_n^*$
and $-1$ and $1$ appear equally often.
\end{lemma}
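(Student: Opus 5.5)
The plan is to compute directly with the Chinese Remainder Theorem, treating the two prime factors separately. Write $p=p_1$ and $q=p_2=2p-1$, so $q-1=2(p-1)$. Since $p>3$ is odd, $4\mid q-1$. Then
\[
n-1=2p^2-p-1=(p-1)(2p+1), \qquad \frac{n-1}{2}=\frac{p-1}{2}\,(2p+1).
\]

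The first step is to reduce the exponent $\frac{n-1}{2}$ modulo $p-1$ and modulo $q-1$, using $2p+1=(2p-2)+3$:
\[
\frac{n-1}{2}=(p-1)^2+3\cdot\frac{p-1}{2}.
\]
The term $(p-1)^2=\frac{p-1}{2}(q-1)$ vanishes modulo both $p-1$ and $q-1$. Hence for $a\in\Z_n^*$:
\begin{itemize}
\item modulo $p$, Euler's criterion (Theorem~\ref{thm:Euler-criterion}) gives $a^{\frac{n-1}{2}}\equiv \left(a^{\frac{p-1}{2}}\right)^3\equiv\left(\frac{a}{p}\right)^3=\left(\frac{a}{p}\right)$;
\item modulo $q$, we get $a^{\frac{n-1}{2}}\equiv a^{3\frac{q-1}{4}}=x^3$, where $x=a^{\frac{q-1}{4}}$ is a fourth root of unity modulo $q$.
\end{itemize}
This exponent bookkeeping is the step I expect to need the most care. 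It is where the oddness of $p$, i.e.\ $4\mid q-1$, is used.

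Next, determine when $a$ is an Euler liar. The Jacobi side is $\left(\frac{a}{n}\right)=\left(\frac{a}{p}\right)\left(\frac{a}{q}\right)=\pm1$, which is the same sign modulo both primes. Put $\varepsilon=\left(\frac{a}{p}\right)$. The condition modulo $p$ then forces $\varepsilon=\left(\frac{a}{p}\right)\left(\frac{a}{q}\right)$, so $\left(\frac{a}{q}\right)=1$. The condition modulo $q$ forces $x^3\equiv\varepsilon$. Now $\left(\frac{a}{q}\right)=x^2$, so $\left(\frac{a}{q}\right)=1$ means exactly $x=\pm1$, and then $x^3=x$. So $a$ is an Euler liar if and only if
\[
a^{\frac{q-1}{4}}\equiv\left(\frac{a}{p}\right)\pmod q,
\]
and in that case $\left(\frac{a}{n}\right)=\varepsilon$.

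Finally, count liars with the CRT, since $a\bmod p$ and $a\bmod q$ range independently over $\Z_p^*\times\Z_q^*$.
\begin{itemize}
\item For each $\varepsilon\in\{1,-1\}$ there are $\frac{p-1}{2}$ residues modulo $p$ with $\left(\frac{a}{p}\right)=\varepsilon$.
\item The map $a\mapsto a^{\frac{q-1}{4}}$ is a surjective homomorphism from the cyclic group $\Z_q^*$ onto the fourth roots of unity. So exactly $\frac{q-1}{4}$ residues modulo $q$ give $x=\varepsilon$.
\end{itemize}
This yields $\frac{p-1}{2}\cdot\frac{q-1}{4}=\frac{\varphi(n)}{8}$ liars with Jacobi symbol $\varepsilon$, for each sign. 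That is $\frac{\varphi(n)}{4}$ Euler liars in total, split equally between Jacobi symbols $1$ and $-1$, as claimed in Lemma~\ref{lem:germinus}.
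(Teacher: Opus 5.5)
Your proof is correct and is essentially the paper's argument without coordinates. The paper writes $a=A_1^{e_1}A_2^{e_2}$ in a CRT basis built from primitive roots, and shows that $a^{(n-1)/2}$ depends only on $(e_1 \bmod 2,\, e_2 \bmod 4)$. It identifies the liars as those with $2e_1+e_2\equiv 0 \pmod 4$, which is your criterion $a^{(q-1)/4}\equiv\left(\frac{a}{p}\right) \pmod q$ in exponent form, and gets the same $\varphi(n)/8$ liars per sign.

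Your exponent bookkeeping (the cube $x^3$ modulo $q$) is slightly more careful than the paper's displayed congruence. The paper writes $A_2^{e_2(p_2-1)/4}$ where $A_2^{3e_2(p_2-1)/4}$ is meant; this is harmless, since the two agree for even $e_2$, the only case giving $\pm 1$. Also, your argument never uses $p_1>3$, so it covers $n=15$ as well.
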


\begin{proof}
Let $n =
p_1p_2$
with
$p_1
>
3$ and
$p_2
=
2p_1
- 1$,
with
$p_1,
p_2$
prime.
This implies that $
p_1
\equiv
1
\pmod{6}
$ and $
p_2
\equiv
1
\pmod{12}$.
For $ i = 1,
2 $ let $
g_i
$ be a primitive root modulo $
p_i
$,
and define $
A_i
\equiv
g_i
\pmod{p_i}
$ and $
A_i
\equiv
1
\pmod{p_j}
$ for $ j
\neq
i $.
Then $
\{
A_1,
A_2
\}
$ is a basis for $
\Z_n^*
$,
i.e.,
we can write any $a
\in
\mathbb{Z}_n^*$
as $a =
A_1^{e_1}A_2^{e_2}$.

Note that
$\lambda(n)
=
2(p_1
-1)$,
$\varphi(n)
= 2
(p_1
-
1)^2$,
$n -1 =
(p_1
- 1)(2
p_1
+ 1)$ and $2(n-1) =
(p_2
-
1)(p_2
+ 2)$.

Now since
$(p_1
-1)
|
(n-1)$,
we have $
A_1^{n-1}
\equiv
1
\pmod{p_1}$
and $
A_1^{n-1}
\equiv
1
\pmod{p_2}
$ by definition,
so $
A_1^{n-1}
\equiv
1
\pmod{n}
$.
But $
A_1^{(n-1)/2}
\not\equiv
\pm
1
\pmod{n}
$ because $
A_1^{(n-1)/2}
\equiv
\left(A_1^{(p_1-1)/2}\right)^{2p_1+1}
\equiv
(-1)^{2p_1+1}
= -1
\pmod{p_1}
$,
while $
A_1^{(n-1)/2}
\equiv
1
\pmod{p_2}
$.
Similarly,
$
A_2^{n-1}
\equiv
1
\pmod{p_1}
$,
but $
A_2^{n-1}
\equiv
\left(A_2^{(p_2-1)/2}\right)^{p_2+2}
\pmod{p_2}
\equiv
(-1)^{p_2+2}
= -1
\pmod{p_2}
$,
so $
A_2^{n-1}
\not\equiv
\pm
1
\pmod{n}$,
while $
A_2^{2(n-1)}
\equiv
1
\pmod{n}$
Therefore $
A_2^{(n-1)/2}
\not\equiv
\pm
1
\pmod{n}
$ is a fourth root of $ 1 $ modulo $ n $.
Let $a
\equiv
A_1^{e_1}A_2^{e_2}
\pmod{n}$,
then
$a^{(n-1)/2}
\equiv
A_1^{e_1(p_1-1)/2}A_2^{e_2(p_2-1)/4}
\pmod{n}$.
It follows that there are $ 8 $ possibilities for $
a^{(n-1)/2}
\pmod{n}
$,
depending on
$e_1
\pmod2$
and
$e_2\pmod4$.
All $ 8 $ occur the same number of times,
namely $
\varphi(n)/8
$.

To have $
A_1^{e_1}A_2^{e_2}\equiv
\pm
1
\pmod{n}$
we need that
$A_1^{e_1(p_1-1)/2}
\pmod{p_1}$
matches
$A_2^{e_2(p_2-1)/4}
\pmod{p_2}$,
hence the latter must be
$\pm
1$,
limiting
$e_2$
to $0$ and
$2\pmod4$.
These match at 1 for residues $(0,0)$ and at $-1$ for residues $(0,2)$,
which we combine into requring $2
e_1
+
e_2
\equiv
0
\pmod{4}$.

For the Jacobi symbol we have $
\left(\frac{A_1^{e_1}A_2^{e_2}}{n}\right)
=
\left(\frac{A_1^{e_1}}{p_1}\right)
\left(\frac{A_2^{e_2}}{p_1}\right)
\left(\frac{A_1^{e_1}}{p_2}\right)
\left(\frac{A_2^{e_2}}{p_2}\right) =
(-1)^{e_1}\cdot
1
\cdot
1
\cdot
(-1)^{e_2}
=
(-1)^{e_1}$
because
$e_2$
is even.
\relax
Hence,
if
$2e_1+e_2\equiv
0
\pmod4$
then $
a^{(n-1)/2}
\equiv
 \left(\frac{a}{n}\right)
\pmod{n}
$.
So 2 of the 8 cases describe Euler liars and both signs appear equally often,
while in the other 6 cases $
a^{(n-1)/2}
\not\equiv
\pm
1
\pmod{n}
$.
\end{proof}

We conjecture the following and leave this for future work.
\begin{conjecture}
If an odd squarefree composite integer $ n $ has
$\varphi(n)/2$
Euler liars in
$\mathbb{Z}_n^*$
then it is a Carmichael number,
and if it has
$\varphi(n)/4$
Euler liars in
$\mathbb{Z}_n^*$
then it is either a Carmichael number or a Sophie Germinus pseudoprime.
\end{conjecture}

For a generalization of Lemma
\ref{lem:germinus},
see
\url{https://math.deweger.net/eulerliars/}.

\relax

\relax
\section*{Acknowledgments}
We thank Andrew Granville and Carl Pomerance for useful discussions.

\bibliographystyle{plainurl}
\bibliography{abbrev0,crypto,references}

\end{document}